\documentclass[a4paper,10pt]{amsart}
\usepackage{amsmath,amssymb,url,comment, enumerate}

\newcommand{\Z}{\mathbb{Z}}

\newcommand{\Q}{\mathbb{Q}}
\newcommand{\Hy}{\mathbb{H}}
\newcommand{\A}{\mathbb{A}}

\newcommand{\sn}[1]{\theta(O^+(L_{#1}))}
\newcommand{\sq}[1]{\dot{\mathbb Q}_{#1}^2}
\newcommand{\un}[1]{\mathbb Z_{#1}^{\times}}
\newcommand{\norms}[1]{N_{#1}(-n\Delta)}

\newtheorem{theorem}{Theorem}[section]
\newtheorem{lemma}[theorem]{Lemma}
\newtheorem{corollary}[theorem]{Corollary}
\newtheorem{proposition}[theorem]{Proposition}

\theoremstyle{remark}
\newtheorem*{remark}{Remark}

\numberwithin{equation}{section}

\newcommand{\qf}[1]{\langle #1 \rangle}
\newcommand{\gen}{\text{gen}\mspace{1mu}L}
\newcommand{\spn}{\text{spn}\mspace{1mu}L}

\newcommand{\ord}{\operatorname{ord}}

\newcommand{\repd}{\rightarrow}

\newcommand{\nrepd}{\not \rightarrow}

\newcommand{\sqf}[1]{\text{sqf}(#1)}


\title[Exceptional sets for spinor regular ternaries]{Exceptional sets for spinor regular ternary quadratic forms}

\author[A. G. Earnest]{A .G. Earnest}
\address{Department of Mathematics, Southern Illinois University, Carbondale, IL, 62901, U.S.A.}
\email{aearnest@siu.edu}

\subjclass[2010]{Primary 11E12; Secondary 11E08 11E20 11E25}%
\keywords{Spinor exceptional integers, spinor regular ternary quadratic forms}

\begin{document}

\begin{abstract}

The goal of this note is to provide an analysis of the positive integers that are represented everywhere locally, but not globally, by each of the 29 spinor regular ternary quadratic forms that are not regular.

\end{abstract}

\maketitle

\section{Introduction}
It is well-known that there is no local-global principle for the representation of integers by integral quadratic forms. Consequently, it is of interest to identify the exceptional set of integers that are represented everywhere locally, but not globally, by such a form. For ternary forms this set can be either finite or infinite; when it is empty, Dickson called the form regular \cite{D}. That is, the regular forms are those that represent all integers represented by their genus. In terminology later introduced by Benham, Hsia, Hung and the author \cite{BEHH}, a ternary form is said to be spinor regular if it represents all the integers represented by its spinor genus.

An integer that is represented by a genus of ternary integral quadratic forms but not by every spinor genus in that genus is referred to as a spinor exceptional integer for the genus. In this note, the general theory of spinor exceptional integers will be used to determine the exceptional sets for each of the spinor regular positive definite ternary primitive integral quadratic forms for which the exceptional set is nonempty. In light of a result of the author and Haensch \cite{EHa}, it is known that there are exactly 29 inequivalent forms with this property. For the 27 of these forms that are alone in their spinor genus, explicit formulas for the numbers of representations of all positive integers by each such form were recently obtained by Aygin, Doyle, M\"{u}nkel, Pehlivan and Williams \cite{A}.

For each of the 29 forms, the exceptional set will be seen to consist of the integers lying in one or more squareclasses whose prime divisors satisfy certain congruence conditions. This phenomenon was observed by Jones and Pall \cite{JP} in their study of the forms in the genera of regular diagonal ternary forms. For example, they observed that the form \[4x^2+9y^2+9z^2+4xy+4xz+2yz\]represents all those integers represented by its genus except for the odd integers $m^2$ for which every prime factor $p$ of $m$ satisfies $p\equiv 1\,(\text{mod}\,4)$.  In all, Jones and Pall listed seven forms with similar properties. A full explanation for this type of behavior later emerged through spinor genus theory. Schulze-Pillot \cite{SP80} proved that each of these seven forms fails to represent precisely those integers that are spinor exceptional integers for their genus. The present paper can be viewed as an extension of the work of Schulze-Pillot applying the theory of spinor exceptional integers to complete the determination of the exceptional sets for each of the remaining spinor regular ternaries that are not regular.

We will follow the notations describing the spinor regular ternary forms in \cite{A}. In that paper, the forms are labelled as A1--A13, B1--B12 and C1--C4, where the forms are grouped according to the prime factors of their discriminant. In this paper, the main results for the three groups appear in Propositions 4.1, 5.1 and 6.1, respectively. 

The remainder of the paper is organized as follows. Section 2 contains notations and conventions that will be used throughout the paper. Some pertinent facts from the general theory of spinor exceptional integers will be reviewed in Section 3. The statements given there are special cases of general results from \cite{SP80}, stated here only in the generality needed to analyze the genera of the spinor regular ternary forms. On the topic of spinor exceptional integers, the interested reader may also wish to consult the excellent survey \cite{SP00}. Results for the forms A1--A13, B1--B12 and C1--C4 are given in Sections 4, 5 and 6, respectively. In the final section of the paper, the spinor regular forms that are not alone in their spinor genus, namely B4 and B11, are analyzed in more detail. Since the results on the positive integers not represented by these forms are obtained only for even integers in \cite{A}, we give the full statements here as Propositions 7.4 and 7.5.

\section{Ternary quadratic forms and lattices}

A ternary quadratic form will be described by the sextuple of integers appearing as its coefficients, where $(a,b,c,d,e,f)$ denotes the quadratic form \[F=ax^2+by^2+cz^2+dyz+exz+fxy.\] This form is referred to as classic if $d$, $e$ and $f$ are even, and non-classic otherwise. The discriminant $\Delta$ of $F$ is defined to be $\frac{1}{2}\textrm{det}M_F$, where $M_F$ is the matrix of second partial derivatives of the form.

For referencing the literature on spinor exceptional integers, it will be convenient to freely switch between the language of quadratic forms and lattices. For quadratic lattices, we will follow the terminology and notation of O'Meara's book \cite{OM}. To the ternary form $F$ given above, we associate the ternary lattice $L$ having Gram matrix $\frac{1}{2}M_F$ with respect to some basis. If $dL$ denotes the discriminant of this lattice, in the sense of \cite{OM}, then $\Delta = 4dL$. Let $\gen$ and $\spn$ denote the genus and spinor genus of $L$, respectively. For an integer $n$, we will use the notation $n\repd N$ to indicate that $n$ is represented by $N$, where $N$ can be the lattice $L$, its $p$-adic completion $L_p$ with respect to some prime $p$, or its genus $\gen$ or spinor genus $\spn$. Note that $n\repd L$ if and only if $n$ is represented by the original form $F$, and $n\repd \gen$ if and only if $n\repd L_p$ for all primes $p$.

\section{Determination of spinor exceptional integers}

Throughout this section, $L$ will denote an arbitrary positive definite integral ternary quadratic $\Z$-lattice, $\Delta$ will be the positive integer $4dL$, and $n$ will be a positive integer represented by $\gen$.

\subsection{General criteria for spinor exceptional integers}

In light of the results of \cite{SP80}, the spinor exceptional integers for a genus are determined by essentially local information. For a prime $p$, we denote the $p$-adic numbers and $p$-adic integers by $\Q_p$ and $\Z_p$, respectively, and the group of units of $\Z_p$ by $\un{p}$. The order of an element $\lambda$ of $\dot{\Q}_p$ will be denoted by $\ord_p(\lambda)$; so $\lambda = p^{\ord_p(\lambda)}\lambda_0$, with $\lambda_0\in \un{p}$.

Let $\sn{p}$ denote the group of spinor norms of rotations on $L_p$, $\theta(L_p,n)$ the relative spinor norm group defined in \cite[Definition 1, p. 531]{SP80}, and $N_p(-n\Delta)$ the group of local norms at $p$ of $\Q(\sqrt{-\sqf{n\Delta}})$, where $\sqf{\gamma}$ denotes the squarefree part of the positive integer $\gamma$. Concretely, \[N_p(-n\Delta)=\{\gamma \in \dot{\Q}_p:(\gamma,-n\Delta)_p=+1\},\] where $(\cdot,\cdot)_p$ denotes the Hilbert symbol at $p$. Note that $N_p(-n\Delta)$ is unchanged when the argument is changed by a square factor; in particular, $N_p(-n\Delta) = N_p(-\sqf{n\Delta})$.

\begin{lemma}\label{local square} If $-n\Delta \in \sq{p}$, then $\theta(L_p,n)=N_p(-n\Delta)$. \end{lemma}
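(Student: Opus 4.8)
The plan is to compute the right-hand side first and then show the left-hand side is squeezed onto it. Because $-n\Delta\in\sq{p}$, every Hilbert symbol $(\gamma,-n\Delta)_p$ with $\gamma\in\dot{\Q}_p$ equals $+1$, since the symbol is trivial as soon as one of its arguments is a square. Thus $N_p(-n\Delta)=\dot{\Q}_p$, and the task reduces to proving $\theta(L_p,n)=\dot{\Q}_p$.

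Next I would produce a convenient representation of $n$. Since $n\repd\gen$, in particular $n\repd L_p$, so there is a vector $v_0\in L_p$ with $Q(v_0)=n$. As $n\neq 0$, this vector splits off and gives an orthogonal decomposition $V_p=\Q_p v_0\perp W$ with $W=v_0^{\perp}$ binary. Comparing determinants across this decomposition yields $dW\equiv n\,dV_p\pmod{\sq{p}}$; and because $dV_p\equiv dL\pmod{\sq{p}}$ while $\Delta=4dL$ with $4$ a square, we get $dV_p\equiv\Delta$ and hence $dW\equiv n\Delta\pmod{\sq{p}}$. The hypothesis $-n\Delta\in\sq{p}$ then reads $-dW\in\sq{p}$, which is exactly the condition that the binary space $W$ be isotropic; therefore $W\cong\Hy$ is a hyperbolic plane.

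The decisive step is to read off the spinor norms of the rotations fixing $v_0$. These are precisely the proper isometries of $W$, extended to $V_p$ by the identity on $\Q_p v_0$, and their spinor norms, computed in the Clifford algebra of $V_p$, agree with those computed in $W$. For a hyperbolic plane the spinor norm map is surjective onto $\dot{\Q}_p$ (the standard computation; see \cite{OM}), so $\theta(O^+(W))=\dot{\Q}_p$. Each such $\sigma$ fixes $v_0$ and therefore carries the norm-$n$ lattice vector $v_0\in L_p$ to $v_0\in L_p$; by the definition of the relative spinor norm group in \cite[Definition 1, p.~531]{SP80} this places $\theta(\sigma)$ in $\theta(L_p,n)$. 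Hence $\dot{\Q}_p=\theta(O^+(W))\subseteq\theta(L_p,n)\subseteq\dot{\Q}_p$, and $\theta(L_p,n)=\dot{\Q}_p=N_p(-n\Delta)$.

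I expect the substance to be light and the only delicate points to be bookkeeping. The first is keeping the discriminant comparison $dW\equiv n\Delta$ honest across the conventions linking $\Delta$, $dL$, and $dV_p$. The second is checking against the precise wording of \cite[Definition 1, p.~531]{SP80} that rotations fixing $v_0$ genuinely contribute to $\theta(L_p,n)$, so that the inclusion $\theta(O^+(W))\subseteq\theta(L_p,n)$ is licensed; the point worth stressing is that one needs no control over whether $\sigma$ preserves all of $L_p$, only that it fixes the single vector $v_0$. With these in hand, the equivalence between isotropy of $W$ and the condition that $-n\Delta$ be a square, together with the full-spinor-norm property of $\Hy$, closes the argument.
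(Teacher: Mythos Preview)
Your proof is correct, but it works harder than the paper's. The paper simply invokes the chain
\[
N_p(-n\Delta)\subseteq\theta(L_p,n)\subseteq\dot{\Q}_p,
\]
which is built into Schulze-Pillot's Definition~1, and then observes that when $-n\Delta$ is a square the left end already equals $\dot{\Q}_p$, so equality is forced by squeezing. You compute the same $N_p(-n\Delta)=\dot{\Q}_p$, but instead of citing the general containment you re-derive it in this special case: you exhibit the orthogonal complement $W$ of a representing vector $v_0$, identify $W$ as a hyperbolic plane via the discriminant comparison, and use $\theta(O^+(\Hy))=\dot{\Q}_p$ together with the fact that rotations of $W$ fix $v_0$. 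This is instructive---it makes transparent \emph{why} the inclusion $N_p(-n\Delta)\subseteq\theta(L_p,n)$ holds (the spinor norms of $O^+(W)$ are exactly the local norms attached to the binary complement, and those rotations trivially land in $\theta(L_p,n)$)---but it unpacks machinery that Schulze-Pillot already packaged into the definition. The paper's proof is two lines; yours reproves the underlying mechanism in the hyperbolic case.
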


\begin{proof} It follows from the definition of $\theta(L_p,n)$ that \[N_p(-n\Delta) \subseteq \theta(L_p,n) \subseteq \dot{\Q}_p.\] The result is immediate from this. \end{proof}

\begin{lemma}\label{containing units} If $\un{p}\sq{p} \subseteq N_p(-n\Delta)$, then $\ord_p(n\Delta)$ is even. Moreover, the reverse implication is true when $p$ is odd. \end{lemma}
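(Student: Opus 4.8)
The plan is to unwind the hypothesis into a statement about Hilbert symbols and then read off the parity of $\ord_p(n\Delta)$ from the structure of the local norm group. By the definition of $\norms{p}$, the containment $\un{p}\sq{p}\subseteq\norms{p}$ is equivalent to the assertion that $(u,-n\Delta)_p=+1$ for every $u\in\un{p}$; that is, every $p$-adic unit is a local norm from $\Q(\sqrt{-\sqf{n\Delta}})$. If $-n\Delta\in\sq{p}$, then $(\gamma,-n\Delta)_p=+1$ for all $\gamma$, so $\norms{p}=\dot{\Q}_p$, the containment holds trivially, and $\ord_p(n\Delta)$ is visibly even; hence I may assume $-n\Delta\notin\sq{p}$, in which case $\norms{p}$ is the index-two norm group of the quadratic extension $\Q_p(\sqrt{-n\Delta})/\Q_p$.

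For the forward implication I argue by contraposition: assuming $\ord_p(n\Delta)$ is odd, I exhibit a unit $u\in\un{p}$ with $(u,-n\Delta)_p=-1$. Writing $-n\Delta=p^{a}w$ with $a$ odd and $w\in\un{p}$ and invoking the standard explicit formula for the Hilbert symbol, the computation splits according to the residue characteristic. For odd $p$ the symbol $(u,-n\Delta)_p$ reduces to $\left(\frac{u}{p}\right)^{a}=\left(\frac{u}{p}\right)$ since $a$ is odd, so any nonsquare unit $u$ does the job. For $p=2$ the detecting unit is one in the squareclass of $5$: a direct application of the $2$-adic formula to a unit $u\equiv 5\pmod 8$ yields $(u,-n\Delta)_2=(-1)^{a}=-1$ whenever $\ord_2(n\Delta)$ is odd, independently of $w$. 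In either case the existence of such a $u$ contradicts the hypothesis, so $\ord_p(n\Delta)$ must be even.

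For the reverse implication, assume $p$ is odd and $\ord_p(n\Delta)$ is even. Then $-n\Delta$ is a square times a unit, so $\Q_p(\sqrt{-n\Delta})/\Q_p$ is either trivial or the unique unramified quadratic extension, whose norm group consists precisely of the elements of even order and therefore contains $\un{p}\sq{p}$. Concretely, for even $a$ the odd-$p$ Hilbert symbol formula gives $(u,-n\Delta)_p=\left(\frac{u}{p}\right)^{a}=+1$ for every unit $u$, which is exactly the desired containment.

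The main obstacle is the $p=2$ case of the forward direction, where the Hilbert symbol is no longer governed by a single Legendre symbol and one must track the behaviour modulo $8$ with some care to pin down a unit that genuinely detects odd order. This asymmetry between odd and even $p$ is precisely what forces the restriction in the second assertion: for $p=2$ the even-order unit classes $3$ and $7$ generate ramified extensions whose norm groups omit some units, so the reverse implication genuinely fails there.
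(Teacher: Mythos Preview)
Your proof is correct and follows essentially the same route as the paper: both argue the forward direction by contraposition, exhibiting a single unit witness with Hilbert symbol $-1$ against $-n\Delta$ when the order is odd, and both handle the converse for odd $p$ by noting that $-n\Delta$ then lies in $\un{p}\sq{p}$ so that two units pair trivially. The only cosmetic difference is that the paper packages the detecting unit as the unit $\varepsilon_p$ of quadratic defect $4\Z_p$ and cites \cite[63:11a, 63:12]{OM}, whereas you unpack this into a nonsquare unit for odd $p$ and a unit $\equiv 5\pmod 8$ for $p=2$ and compute the symbols by hand; these are the same units, so the arguments coincide.
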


\begin{proof} Suppose first that $\ord_p(-n\Delta)$ is odd. Then $(\varepsilon_p,-n\Delta)_p=(\varepsilon_p,p)_p=-1$ by \cite[63:11a]{OM}, where $\varepsilon_p$ denotes a unit of $\Z_p$ of quadratic defect $4\Z_p$. This establishes the first implication. If $p$ is odd and $\ord_p(n\Delta)$ is even, then $-n\Delta \in\un{p}\sq{p}$. Hence $(\gamma, -n\Delta)_p=+1$ for any $\gamma \in \un{p}\sq{p}$ by \cite[63:12]{OM}. \end{proof}

The following theorem of Schulze-Pillot \cite[Satz 2]{SP80} gives complete criteria for $n$ to be a spinor exceptional integer for $\gen$.

\begin{theorem}\label{SP criteria} The integer $n$ is a spinor exceptional integer for $\gen$ if and only if
\[\sn{p}\subseteq \norms{p} \quad \textrm{and} \quad \theta(L_p,n)= \norms{p}\]
for all primes $p$.
\end{theorem}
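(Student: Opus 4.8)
The plan is to reduce the statement to a single idelic identity and then read it off place by place. Throughout, let $J$ denote the idele group of $\Q$, let $J_L=\prod_p\sn{p}$ be the restricted product of the local spinor norm groups, and recall from the general theory (see \cite{OM} and the references in \cite{SP80}) that the spinor genera in $\gen$ are parametrized by the finite elementary $2$-group $J/\dot{\Q}J_L$. Since $n\repd\gen$, some lattice $K$ in $\gen$ satisfies $n\repd K$. The structural input I would invoke is that the spinor genera in $\gen$ representing $n$ form exactly one coset of the image of the adelic relative spinor norm group $\prod_p\theta(L_p,n)$ inside $J/\dot{\Q}J_L$; this is legitimate because $\sn{p}\subseteq\theta(L_p,n)$ makes that image a subgroup. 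Consequently $n$ is a spinor exceptional integer precisely when this image is a \emph{proper} subgroup, i.e.\ when $\dot{\Q}\prod_p\theta(L_p,n)\ne J$.

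Next I would pin down the relevant index-$2$ subgroup of $J$ by class field theory. Put $E=\Q(\sqrt{-\sqf{n\Delta}})$, which is an imaginary quadratic field since $-n\Delta<0$. The global norm-residue symbol $\chi\colon J\to\{\pm1\}$ given by $\chi(x)=\prod_p(x_p,-n\Delta)_p$ is well defined, kills $\dot{\Q}$ by Hilbert reciprocity, and has kernel exactly $\dot{\Q}\prod_p\norms{p}$, the index-$2$ subgroup attached to $E/\Q$. At the archimedean place this is harmless, since $\sn{\infty}=\norms{\infty}=\dot{\mathbb{R}}_{>0}$ and the conditions there hold automatically.

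The heart is then a sandwich argument. From the proof of Lemma~\ref{local square} one has $\norms{p}\subseteq\theta(L_p,n)$ for every $p$, so $\ker\chi=\dot{\Q}\prod_p\norms{p}\subseteq\dot{\Q}\prod_p\theta(L_p,n)$. As $\ker\chi$ has index $2$ in $J$, the larger group is either $J$ or $\ker\chi$ itself; hence $n$ is spinor exceptional if and only if $\dot{\Q}\prod_p\theta(L_p,n)=\ker\chi$, equivalently $\prod_p\theta(L_p,n)\subseteq\ker\chi$. Because $\chi$ restricted to a restricted direct product factors as the product of the local characters $\gamma\mapsto(\gamma,-n\Delta)_p$, testing on ideles supported at a single place shows this holds if and only if $\theta(L_p,n)\subseteq\norms{p}$ for every $p$; with the reverse containment this is exactly $\theta(L_p,n)=\norms{p}$ for all $p$. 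The companion condition $\sn{p}\subseteq\norms{p}$ is then forced, via $\sn{p}\subseteq\theta(L_p,n)$; it records the inclusion $\dot{\Q}J_L\subseteq\ker\chi$, and hence that $\gen$ carries the nontrivial spinor character attached to $E$ (so in particular $\gen\ne\spn$). Assembling these equivalences gives the two displayed local conditions.

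I expect the main obstacle to be the first step: justifying the coset description of the spinor genera representing $n$. This is the genuinely global part, and it is exactly where positive definiteness intervenes, since strong approximation for the spin group fails at the archimedean place and one must instead control the image of the global spin group inside $J/\dot{\Q}J_L$. In a self-contained treatment I would import this representation-by-spinor-genus framework from \cite{SP80} (and the foundational work it rests on), and devote the remaining effort to the purely local bookkeeping with the Hilbert symbol carried out above.
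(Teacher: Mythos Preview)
The paper does not supply a proof of this theorem at all: it is quoted as Satz~2 of \cite{SP80} and used as a black box in Sections~4--6. There is therefore nothing in the paper to compare your argument against beyond the bare citation.

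That said, your sketch is a faithful reconstruction of the standard proof underlying \cite{SP80}. The idelic coset description of the spinor genera representing $n$ (which you correctly flag as the nontrivial global input), the identification of the relevant index-$2$ subgroup of $J$ with $\ker\chi$ for the quadratic character attached to $\Q(\sqrt{-\sqf{n\Delta}})$, and the sandwich $\norms{p}\subseteq\theta(L_p,n)$ forcing the dichotomy, are exactly the ingredients of Schulze-Pillot's argument. Your observation that the condition $\sn{p}\subseteq\norms{p}$ is already forced by $\theta(L_p,n)=\norms{p}$ together with $\sn{p}\subseteq\theta(L_p,n)$ is also correct; the theorem as stated carries this redundancy, but it is useful in practice since the spinor norm condition is the easier one to test first.
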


The local conditions appearing in the statement of this theorem will be discussed in more detail in the following two subsections. 

\smallskip

\subsection{Primes not dividing $2\Delta$}

Throughout this subsection, $p$ will denote a prime not dividing $2\Delta$. Consequently, $p$ is odd, $\ord_p(\Delta)=0$ and $L_p$ is unimodular. So $\sn{p}=\un{p}\sq{p}$ by \cite[92:5]{OM}.

\begin{lemma}\label{even order} $\sn{p}\subseteq N_p(-n\Delta) \iff \ord_p(n)\, \textrm{is even}$. \end{lemma}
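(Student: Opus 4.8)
The plan is to deduce this equivalence directly from Lemma~\ref{containing units}, after rewriting the left-hand side using the structure of $L_p$. Since $p \nmid 2\Delta$, the lattice $L_p$ is unimodular, and as recorded at the start of this subsection we have $\sn{p} = \un{p}\sq{p}$ by \cite[92:5]{OM}. Consequently the containment $\sn{p} \subseteq N_p(-n\Delta)$ is identical to $\un{p}\sq{p} \subseteq N_p(-n\Delta)$, and it suffices to analyze the latter.

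Next I would invoke Lemma~\ref{containing units}. Because $p$ is odd, that lemma supplies \emph{both} implications, so $\un{p}\sq{p} \subseteq N_p(-n\Delta)$ holds if and only if $\ord_p(n\Delta)$ is even. Finally, since $p \nmid \Delta$ forces $\ord_p(\Delta) = 0$, we have $\ord_p(n\Delta) = \ord_p(n) + \ord_p(\Delta) = \ord_p(n)$, so the parity condition on $n\Delta$ coincides with the parity condition on $n$. Chaining these equivalences yields the stated result.

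There is no substantial obstacle here: the statement is essentially the specialization of Lemma~\ref{containing units} to the unimodular case at a prime coprime to $\Delta$. The only points requiring care are (i) that the hypothesis $p \nmid \Delta$ is what lets us replace $n\Delta$ by $n$ in the order condition, and (ii) that the oddness of $p$ is precisely what makes the reverse implication of Lemma~\ref{containing units} available, so that both directions of the asserted equivalence are obtained for free.
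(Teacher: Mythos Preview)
Your proof is correct and follows exactly the same approach as the paper: reduce to Lemma~\ref{containing units} via $\sn{p}=\un{p}\sq{p}$, use that $p$ is odd to get both implications, and observe $\ord_p(n\Delta)=\ord_p(n)$ since $p\nmid\Delta$. The paper's proof is simply the one-sentence version of what you wrote.
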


\begin{proof} This follows from Lemma~\ref{containing units} since $\ord_p(n\Delta)=\ord_p(n)$. \end{proof}

\begin{lemma}\label{relative spinor norms} Assume that $\ord_p(n)$ is even.
\begin{itemize}
    \item[(i)] If $\ord_p(n)=0$, then $\theta(L_p,n)=N_p(-n\Delta)$.
    \item[(ii)] If $\ord_p(n)>0$, then \[\theta(L_p,n)=N_p(-n\Delta) \iff -n\Delta \in \sq{p}.\]
\end{itemize}
\end{lemma}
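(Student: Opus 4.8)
The plan is to reduce the whole statement to a single dichotomy and then settle the two cases by a structural argument and by an explicit construction, respectively.

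\emph{Reduction.} First I would record that, since $p$ is odd, $p\nmid\Delta$, and $\ord_p(n\Delta)=\ord_p(n)$ is even, the class of $-n\Delta$ in $\dot{\Q}_p/\sq{p}$ is represented by a unit, so $\Q(\sqrt{-\sqf{n\Delta}})$ is unramified at $p$. Hence $\norms{p}=\dot{\Q}_p$ when $-n\Delta\in\sq{p}$, and $\norms{p}=\un{p}\sq{p}$ (the index-two subgroup of elements of even order) otherwise. In either case the containments $\norms{p}\subseteq\theta(L_p,n)\subseteq\dot{\Q}_p$ from the proof of Lemma~\ref{local square} yield $\un{p}\sq{p}\subseteq\theta(L_p,n)$, and since $[\dot{\Q}_p:\un{p}\sq{p}]=2$ I conclude that $\theta(L_p,n)\in\{\un{p}\sq{p},\,\dot{\Q}_p\}$. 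Thus the lemma reduces to deciding whether $\theta(L_p,n)$ contains an element of odd order.

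\emph{Part (i).} If $-n\Delta\in\sq{p}$ then Lemma~\ref{local square} already gives $\theta(L_p,n)=\norms{p}$, so I would assume $-n\Delta\notin\sq{p}$ and show $\theta(L_p,n)=\un{p}\sq{p}$. Because $n$ is a unit, a vector $y_0$ of norm $n$ in $L_p$ is primitive and splits off orthogonally, giving $L_p=\langle n\rangle\perp M$ with $M$ binary unimodular and $-dM\equiv-n\Delta\pmod{\sq{p}}$ nonsquare, so $M$ is anisotropic. Using transitivity of $O^+(L_p)$ on the norm-$n$ vectors of the unimodular lattice $L_p$, an orbit--stabilizer computation would write each $\sigma\in O^+(V_p)$ (here $V_p$ is the ambient space) with $\sigma(y_0)\in L_p$ as a product of an element of $O^+(L_p)$ and an element of the stabilizer $O^+(M)$, so that $\theta(L_p,n)=\sn{p}\cdot\theta(O^+(M))$. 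For a unimodular binary lattice over an odd prime one has $\theta(O^+(M))=N_p(-dM)$, which equals $\un{p}\sq{p}$ by anisotropy; hence $\theta(L_p,n)=\un{p}\sq{p}=\norms{p}$.

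\emph{Part (ii).} Now $\ord_p(n)=2k\ge 2$. A rank-three unimodular lattice over an odd prime is isotropic (its reduction is isotropic over $\mathbb F_p$, and an isotropic vector lifts by Hensel), so $L_p=H\perp\langle c\rangle$ with $H=\Z_p e+\Z_p f$, $Q(e)=Q(f)=0$, $B(e,f)=1$. Then $v=\tfrac{n}{2}e+f\in L_p$ has $Q(v)=n$. I would take $\rho\in O^+(V_p)$ acting by $e\mapsto p^{-1}e$, $f\mapsto pf$ and fixing $\langle c\rangle$; this is a proper isometry whose spinor norm is that of a hyperbolic rotation, $\theta(\rho)=p^{-1}\equiv p\pmod{\sq{p}}$. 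Since $\rho(v)=\tfrac{u}{2}p^{2k-1}e+pf$ with $2k-1\ge 1$, we have $\rho(v)\in L_p$ and $Q(\rho(v))=n$, so $\rho$ carries the norm-$n$ vector $v\in L_p$ to a norm-$n$ vector of $L_p$ and $p\in\theta(L_p,n)$. By the reduction, $\theta(L_p,n)=\dot{\Q}_p$, whence $\theta(L_p,n)=\norms{p}$ if and only if $\norms{p}=\dot{\Q}_p$, i.e.\ if and only if $-n\Delta\in\sq{p}$.

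\emph{Main obstacle.} The constructions giving the lower bounds (the odd-order element in Part (ii), and $\un{p}\sq{p}$ throughout) are routine. The delicate step is the upper bound in Part (i): excluding any odd-order spinor norm requires the precise definition of the relative spinor norm group from \cite[Definition 1]{SP80}, together with the transitivity of $O^+(L_p)$ on unit-norm vectors and the binary identity $\theta(O^+(M))=N_p(-dM)$. Alternatively, both parts follow by specializing the general local computations of relative spinor norm groups in \cite{SP80}.
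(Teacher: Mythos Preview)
Your argument is correct and takes a genuinely different route from the paper's. The paper handles the case $-n\Delta\notin\sq{p}$ in a single line by citing \cite[Satz~3(a)]{SP80} with $r=s=0$: in the unramified, unimodular situation that result gives directly that $\theta(L_p,n)=\norms{p}$ if and only if $\ord_p(n)=0$, which yields both (i) and (ii) at once. You instead unpack what that citation encodes. In Part~(i) you compute $\theta(L_p,n)$ via an orbit--stabilizer decomposition reducing to the spinor norm group of the binary orthogonal complement, and in Part~(ii) you exhibit an explicit hyperbolic rotation of spinor norm $p$ carrying one representation of $n$ to another. Your approach is essentially self-contained and explains \emph{why} the threshold is $\ord_p(n)=0$; the paper's approach is shorter but treats Schulze-Pillot's local computation as a black box.

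Two minor points of hygiene in your write-up. First, the transitivity you need in Part~(i) is literally for $O(L_p)$ (e.g.\ \cite[92:3]{OM}); to pass to $O^+(L_p)$ you should compose with a reflection in a unit-norm vector of $M$, which fixes $y_0$ and lies in $O(L_p)$. Second, the stabilizer of $y_0$ in $O^+(V_p)$ is $O^+(\Q_pM)$ rather than $O^+(M)$; these happen to coincide here because the anisotropic unimodular binary lattice over $\Z_p$ is preserved by every rotation of its ambient space, but it is worth saying so. Neither point affects the validity of the argument.
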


\begin{proof} If $-n\Delta \in \sq{p}$, then $\theta(L_p,n)=N_p(-n\Delta)$ by Lemma~\ref{local square}. So suppose that $-n\Delta \notin \sq{p}$. Here $\ord_p(n)$ even implies that $\sn{p}\subseteq N_p(-n\Delta)$ by Lemma~\ref{even order}. Since $p\nmid \Delta$ and $\ord_p(n)$ is even, $p$ is unramified in $\Q(\sqrt{-\sqf{n\Delta}})$. So \cite[Satz 3(a)]{SP80} applies with $r=s=0$. Hence, $\sn{p}\subseteq N_p(-n\Delta)$ holds if and only if $\ord_p(n)=0$. \end{proof}

For a positive integer $t$, let $M_t$ denote the multiplicative semigroup generated by $1$ and the set of all primes $p$ such that $-t\in \sq{p}$. For any $s\in \dot{\Q}$, note that $M_t=M_{s^2t}$; in particular, $M_t=M_{\sqf{t}}$. Also, $M_t^2$ will denote $\{w^2:w\in M_t\}$. 

\begin{corollary}\label{role of Mt} Assume that $\ord_p(n)$ is even for all $p\nmid 2\Delta$. Write $n=kw^2$, where all prime divisors of $k$ divide $2\Delta$ and $\textrm{g.c.d.}(w,2\Delta)=1$. Then \[\theta(L_p,n)=N_p(-n\Delta)\,\,\textrm{for all}\,\, p\nmid 2\Delta \iff w\in M_{n\Delta}.\] \end{corollary}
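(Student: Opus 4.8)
The plan is to reduce the global condition "$\theta(L_p,n)=\norms{p}$ for all $p\nmid 2\Delta$" to a pointwise condition on the prime divisors of $w$, and then to recognize the latter as the membership $w\in M_{n\Delta}$. First I would record the effect of the decomposition $n=kw^2$ on local orders. Since every prime divisor of $k$ divides $2\Delta$ and $\textrm{g.c.d.}(w,2\Delta)=1$, for any prime $p\nmid 2\Delta$ we have $p\nmid k$, and hence $\ord_p(n)=2\ord_p(w)$. In particular, $\ord_p(n)=0$ exactly when $p\nmid w$ and $\ord_p(n)>0$ exactly when $p\mid w$; moreover every prime $p\mid w$ automatically satisfies $p\nmid 2\Delta$.

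Next I would split the primes $p\nmid 2\Delta$ into these two cases and apply Lemma~\ref{relative spinor norms}. For $p\nmid w$ (so $\ord_p(n)=0$), part (i) gives $\theta(L_p,n)=\norms{p}$ with no further condition, so such primes impose no constraint. For $p\mid w$ (so $\ord_p(n)>0$), part (ii) says that $\theta(L_p,n)=\norms{p}$ holds if and only if $-n\Delta\in\sq{p}$. Combining the two cases, the condition $\theta(L_p,n)=\norms{p}$ for all $p\nmid 2\Delta$ is equivalent to the assertion that $-n\Delta\in\sq{p}$ for every prime $p$ dividing $w$.

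Finally I would match this pointwise condition with the definition of $M_{n\Delta}$. By definition, $M_{n\Delta}$ is the multiplicative semigroup generated by $1$ together with all primes $p$ for which $-n\Delta\in\sq{p}$; by unique factorization in $\Z$, a positive integer lies in $M_{n\Delta}$ precisely when each of its prime factors belongs to this generating set. Hence $w\in M_{n\Delta}$ if and only if $-n\Delta\in\sq{p}$ for every prime $p\mid w$, which is exactly the condition obtained in the previous step, and the equivalence follows. (When $w=1$ the condition is vacuous and $1\in M_{n\Delta}$, so this edge case is consistent.)

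I do not anticipate a genuine obstacle here; the argument is essentially a careful bookkeeping of local orders feeding into Lemma~\ref{relative spinor norms}. The one point requiring attention is the translation between the semigroup membership $w\in M_{n\Delta}$ and the condition on the individual prime divisors of $w$, which rests on the facts that $M_{n\Delta}$ is generated by primes and that factorization in $\Z$ is unique.
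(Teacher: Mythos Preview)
Your proposal is correct and follows exactly the natural route the paper implies: the corollary is stated in the paper without proof as an immediate consequence of Lemma~\ref{relative spinor norms}, and your argument spells out precisely those details. The key steps---reducing to $\ord_p(n)=2\ord_p(w)$ for $p\nmid 2\Delta$, applying parts (i) and (ii) of Lemma~\ref{relative spinor norms} according to whether $p\mid w$, and then matching the resulting prime-by-prime condition to the definition of $M_{n\Delta}$---are exactly what is intended.
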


\subsection{Primes dividing $2\Delta$}

Throughout this subsection, $p$ will denote a prime divisor of $2\Delta$. 

\begin{lemma}\label{odd divisors} Let $p$ be an odd prime such that $-n\Delta \notin \sq{p}$ and $\sn{p}\subseteq N_p(-n\Delta)$.
 \begin{itemize}
    \item[(i)] If $L_p\cong \qf{1,p,p^2}$ or $L_p\cong \qf{1,p^2,p^3}$, then \[\theta(L_p,n)=N_p(-n\Delta) \iff \ord_p(n)\leq 1.\]
    \item[(ii)] If $L_p\cong \qf{1,p,p^3}$, then \[\theta(L_p,n)=N_p(-n\Delta) \iff \ord_p(n)\leq 2.\]
 \end{itemize}
\end{lemma}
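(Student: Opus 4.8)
The plan is to invoke Schulze-Pillot's Satz 3 \cite{SP80} in the same way it was used for Lemma~\ref{relative spinor norms}, but now with the parameters dictated by the three-term Jordan splittings at hand. Two preliminary reductions organize the argument. First, the hypotheses $-n\Delta\notin\sq{p}$ and $\sn{p}\subseteq\norms{p}$ are exactly what places us in the nontrivial range: Lemma~\ref{local square} is not applicable, while $\sn{p}\subseteq\norms{p}$ is the first of the two conditions in Theorem~\ref{SP criteria}, so the lemma is precisely the computation of when the \emph{second} condition $\theta(L_p,n)=\norms{p}$ holds. Second, a direct computation from the Jordan splitting shows that in each of the three cases the full group of local spinor norms is $\sn{p}=\{1,p\}\sq{p}$, a group of order two in $\dot{\Q}_p/\sq{p}$ (the rank-one components and the scale gaps contribute nothing beyond the reflection products $1\cdot p$). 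Since $-n\Delta\notin\sq{p}$ forces $\norms{p}$ to have index two as well, the containment $\sn{p}\subseteq\norms{p}$ already gives $\sn{p}=\norms{p}$. Because $\sn{p}\subseteq\theta(L_p,n)$ always holds, the desired equality $\theta(L_p,n)=\norms{p}$ is therefore equivalent to $\theta(L_p,n)=\sn{p}$, i.e.\ to the assertion that representing $n$ introduces \emph{no} spinor norm outside $\{1,p\}\sq{p}$.

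With this reformulation in hand, I would treat each Jordan type in turn. Writing $n=p^mu$ with $u\in\un{p}$ and recording $\ord_p(\Delta)$ (namely $3$ for $\qf{1,p,p^2}$, $5$ for $\qf{1,p^2,p^3}$, and $4$ for $\qf{1,p,p^3}$), the parity of $\ord_p(n\Delta)=m+\ord_p(\Delta)$ together with $-n\Delta\notin\sq{p}$ determines whether $p$ ramifies in $\Q(\sqrt{-\sqf{n\Delta}})$, and hence which clause of Satz 3 applies (the unramified clause being the one already used for Lemma~\ref{relative spinor norms}). Fixing the ramification type, the parameters of Satz 3 are read off from the Jordan scales of $L_p$ relative to $m$, and its criterion for $\theta(L_p,n)=\norms{p}$ collapses to the stated inequality: $\ord_p(n)\le 1$ for the two lattices in (i) and $\ord_p(n)\le 2$ for $\qf{1,p,p^3}$. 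The reason the threshold rises in case (ii) can be read off from the reduction above: a primitive vector $x$ of norm $n$ has binary orthogonal complement $(\Z_p x)^\perp$, and $\theta(L_p,n)$ stays inside $\sn{p}$ exactly as long as that complement, together with the rotations moving $x$ among norm-$n$ vectors of $L_p$, contributes only classes in $\{1,p\}\sq{p}$; the larger top gap $1\to 3$ in $\qf{1,p,p^3}$ keeps the complement ``rigid'' one step longer, pushing the transition from $m=2$ out to $m=3$.

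I expect the main obstacle to be the bookkeeping needed to align each configuration with the correct clause and parameters of Satz 3. In particular, in the borderline cases where $m+\ord_p(\Delta)$ is even one must use the unit (Hilbert symbol) data to confirm $-n\Delta\notin\sq{p}$ and to pin down the ramification type, and one must verify that only rotations genuinely stabilizing $L_p$ are counted in $\theta(L_p,n)$: it is precisely this lattice-preservation constraint, rather than the bare existence of a rotation relating two representations, that prevents spurious units from entering $\theta(L_p,n)$ below the threshold and so makes the bounds come out as stated. Once the ramified/unramified dichotomy and the relevant scales are correctly identified, extracting the inequalities $\ord_p(n)\le 1$ and $\ord_p(n)\le 2$ is a direct reading of Satz 3.
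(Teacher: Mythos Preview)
Your approach is correct and is essentially the paper's: both arguments reduce the question to Schulze-Pillot's Satz~3, reading off the threshold on $\ord_p(n)$ from the Jordan parameters $(r,s)$. The paper's proof is three lines: it notes that the hypotheses place us in Satz~3(b) and then quotes subcase~(ii) for $\qf{1,p,p^2}$ and $\qf{1,p,p^3}$ (with $r=1$, $s=2$ or $3$) and subcase~(i) for $\qf{1,p^2,p^3}$ (with $r=2$, $s=3$).

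One simplification you overlook. Your own computation already shows $\sn{p}=\{1,p\}\sq{p}=N_p(-n\Delta)$; but for odd $p$ the equality $N_p(-n\Delta)=\{1,p\}\sq{p}$ forces $(\varepsilon_p,-n\Delta)_p=-1$ for a nonsquare unit $\varepsilon_p$, hence $\ord_p(n\Delta)$ is odd and $p$ ramifies in $\Q(\sqrt{-\sqf{n\Delta}})$. So the ``unramified clause'' of Satz~3 never arises under the stated hypotheses, and the case split on the parity of $m+\ord_p(\Delta)$ that you anticipate is vacuous. Dropping it makes your argument coincide with the paper's and removes the ``borderline cases'' you flag as a potential obstacle.
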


\begin{proof} By the assumptions, we are in the situation of Satz 3(b) of \cite{SP80}. For $L_p\cong \qf{1,p,p^2}$ or $L_p\cong \qf{1,p, p^3}$, the stated result follows from subcase (ii) of Satz 3(b) with $r=1, s=2 \, \textrm{or} \, 3$. For $L_p\cong \qf{1,p^2,p^3}$, it follows from subcase (i) with $r=2, s=3$.
\end{proof}

For the prime $2$, there are numerous possibilities for the nature of a Jordan splitting of $L_2$. So in this case, we will state only the general form of the needed result, and provide specific references to the relevant subcases of the original result in \cite[Satz 4]{SP80} as they arise.

\begin{lemma}\label{prime 2} Assume that $-n\Delta \notin \sq{2}$ and $\sn{2}\subseteq N_2(-n\Delta)$. Then there exists a nonnegative integer $\lambda$ such that  \[\theta(L_2,n)=N_2(-n\Delta) \iff \ord_2(n)\leq \lambda.\]
\end{lemma}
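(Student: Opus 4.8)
The plan is to reduce the claimed equivalence to a dichotomy for $\theta(L_2,n)$ and then read off the threshold from Schulze-Pillot's local analysis at the prime $2$. First I would record, exactly as in the proof of Lemma~\ref{local square}, that the definition of the relative spinor norm group gives the inclusions $\norms{2}\subseteq \theta(L_2,n)\subseteq \dot{\Q}_2$. Since $-n\Delta\notin \sq{2}$, the map $\gamma\mapsto (\gamma,-n\Delta)_2$ is a nontrivial homomorphism $\dot{\Q}_2\to\{\pm 1\}$ whose kernel is $\norms{2}$, so $[\dot{\Q}_2:\norms{2}]=2$. Consequently $\theta(L_2,n)$ is one of the two groups $\norms{2}$ or $\dot{\Q}_2$, and the equality $\theta(L_2,n)=\norms{2}$ fails precisely when $\theta(L_2,n)=\dot{\Q}_2$. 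Thus it suffices to show that, among the $n$ meeting the standing hypotheses, the attainable valuations $\ord_2(n)$ for which $\theta(L_2,n)=\dot{\Q}_2$ are exactly those exceeding some threshold $\lambda$.

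Next I would invoke Satz~4 of \cite{SP80}, which, under the hypotheses $-n\Delta\notin\sq{2}$ and $\sn{2}\subseteq\norms{2}$, furnishes an explicit description of $\theta(L_2,n)$ as a function of $\ord_2(n)$, with the Jordan splitting of $L_2$ and the square class of $-n\Delta$ entering as fixed parameters. This is the direct $2$-adic analogue of the way Lemma~\ref{odd divisors} was extracted from Satz~3(b) at odd primes: in each admissible case the criterion separating $\theta(L_2,n)=\norms{2}$ from $\theta(L_2,n)=\dot{\Q}_2$ takes the shape of an inequality on $\ord_2(n)$. The content of the lemma is precisely that this inequality is an upper bound $\ord_2(n)\le\lambda$, with $\lambda$ determined by the Jordan invariants of $L_2$; since we assert only existence here, the specific value in each splitting type can be deferred and quoted from the relevant subcase of Satz~4 as it is needed later.

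The step I expect to be the main obstacle is verifying that the dependence on $\ord_2(n)$ genuinely has this monotone, single-threshold form in every Jordan-splitting type that occurs at $2$. Concretely, one must check that enlarging $\ord_2(n)$ can only enlarge $\theta(L_2,n)$, so that once $\theta(L_2,n)=\dot{\Q}_2$ is reached it persists for all larger admissible valuations; together with the index-$2$ dichotomy this forces the set $\{\ord_2(n):\theta(L_2,n)=\norms{2}\}$ to be an initial segment and produces $\lambda$. Unlike the odd-prime case, there are many Jordan types at $2$, and the finer square-class structure modulo $8$, together with the classic versus non-classic distinction, makes the bookkeeping substantially heavier; the monotonicity must be confirmed uniformly across all of these subcases rather than in the handful of types appearing in Lemma~\ref{odd divisors}.
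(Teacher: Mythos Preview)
Your approach matches the paper's, which in fact does not supply a proof for this lemma at all: the paper states it as ``the general form of the needed result'' and defers entirely to the subcases of \cite[Satz~4]{SP80}, citing the relevant subcase and reading off the value of $\lambda$ only when a particular splitting type of $L_2$ arises in the later tables. Your reduction to the index-two dichotomy $\theta(L_2,n)\in\{\norms{2},\dot{\Q}_2\}$ and your appeal to Satz~4 are exactly the intended mechanism.

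Where you go further than the paper is in framing the existence of a single threshold $\lambda$ as a monotonicity statement to be verified uniformly across all $2$-adic Jordan types. That is the right diagnosis if one wants a self-contained proof of the lemma in the generality stated, but the paper does not attempt this: it simply asserts the lemma and then, for each of the finitely many forms under consideration, quotes the specific subcase of Satz~4 that applies (see Tables~\ref{Data for Group A} and~\ref{Data for Group B}), where the threshold form $\ord_2(n)\le\lambda$ is already explicit. So your ``main obstacle'' is real for a general proof, but the paper bypasses it by never needing the lemma outside those enumerated cases.
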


\begin{corollary}\label{order 0} Let $p$ be a prime dividing $2\Delta$ and assume that $\lambda \geq 1$ if $p=2$. If $\sn{p}\subseteq \norms{p}$ and $\ord_p(n)=0$, then $\theta(L_p,n)=\norms{p}$. \end{corollary}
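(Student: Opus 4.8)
The plan is to deduce the corollary from the three structural lemmas of this section by a short case analysis, organized according to whether $-n\Delta$ is a square in $\Q_p$ and whether $p$ is odd or equal to $2$. In each case the hypothesis $\ord_p(n)=0$ will lie below the relevant threshold, which is exactly what forces the equality $\theta(L_p,n)=\norms{p}$.

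First I would peel off the square case. If $-n\Delta\in\sq{p}$, then $\norms{p}=\dot{\Q}_p$, so the containment hypothesis $\sn{p}\subseteq\norms{p}$ holds automatically, and Lemma~\ref{local square} gives $\theta(L_p,n)=\norms{p}$ immediately, with no restriction on $p$. Hence for the rest of the argument I may assume $-n\Delta\notin\sq{p}$, which is precisely the standing hypothesis shared by Lemmas~\ref{odd divisors} and~\ref{prime 2}.

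Next I would split on the parity of $p$. If $p$ is odd, then together with $\sn{p}\subseteq\norms{p}$ the assumptions place us exactly in the situation of Lemma~\ref{odd divisors} (subcase~(b) of Satz~3 of \cite{SP80}). Since $\ord_p(n)=0$, the order bound $\ord_p(n)\le 1$ in case~(i) and $\ord_p(n)\le 2$ in case~(ii) is satisfied, so the lemma yields $\theta(L_p,n)=\norms{p}$. For $p=2$, the hypotheses $-n\Delta\notin\sq{2}$ and $\sn{2}\subseteq\norms{2}$ put us in the setting of Lemma~\ref{prime 2}, which produces a nonnegative integer $\lambda$ with $\theta(L_2,n)=\norms{2}\iff\ord_2(n)\le\lambda$. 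The standing assumption $\lambda\ge 1$ then gives $\ord_2(n)=0\le\lambda$, so again $\theta(L_2,n)=\norms{2}$. Combining the three cases completes the proof.

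I expect the work here to be organizational rather than computational: the task is to confirm that the case division ($-n\Delta$ a square or not, crossed with $p$ odd or $p=2$) is exhaustive and that the hypotheses of each invoked lemma genuinely transfer. The one point deserving care is that Lemmas~\ref{odd divisors} and~\ref{prime 2} are stated only under $-n\Delta\notin\sq{p}$, so the square case must be handled separately by Lemma~\ref{local square} before anything else; once that is done, the single observation that $\ord_p(n)=0$ falls below every threshold — namely $1$, $2$, and $\lambda\ge 1$ — carries out the remainder of the argument.
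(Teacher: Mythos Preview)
Your argument follows the same case split as the paper: treat the square case by Lemma~\ref{local square}, then in the non-square case invoke the odd-prime result from \cite{SP80} and Lemma~\ref{prime 2} for $p=2$. The one point to tighten is that Lemma~\ref{odd divisors}, as stated, covers only the three specific splittings $\qf{1,p,p^2}$, $\qf{1,p^2,p^3}$, $\qf{1,p,p^3}$, not an arbitrary $L_p$ with $p\mid\Delta$; the paper therefore cites Satz~3(b) of \cite{SP80} directly for the odd non-square case rather than Lemma~\ref{odd divisors}. Your parenthetical already points there, so simply make Satz~3(b) the primary citation in that step.
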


\begin{proof} For odd $p$, this follows either from Lemma~\ref{local square} or from Satz 3(b) of \cite{SP80}. For $p=2$, it follows either from Lemma~\ref{local square} or Lemma~\ref{prime 2}. \end{proof}

\section{Discriminants divisible only by 2}

The spinor regular ternaries with discriminants divisible only by 2 are labelled as A1--A13 in \cite{A}. Table~\ref{Group A} contains a list of representatives for all of the classes of forms in the genus of each of these forms, along with their separation into spinor genara and their discriminant. These representatives and their separation into spinor genera were obtained by the use of Magma \cite{Bo}. In each case, the spinor regular form is alone in its spinor genus, which is labelled Spinor Genus I, and the remaining classes constitute a second spinor genus labelled Spinor Genus II. These spinor genera will subsequently be referred to simply as SGI and SGII.

\begin{table}[h]
    \begin{center}
        \begin{tabular}{| c | l | l | c |}
            \hline
            \# & Spinor Genus I & Spinor Genus II & $\Delta$ \\ \hline \hline
            A1 & (2,2,5,2,2,0) & (1,1,16,0,0,0) & $2^6$ \\ \hline
            A2 & (1,4,9,4,0,0) & (1,1,32,0,0,0) & $2^7$ \\
               &               & (2,2,9,-2,2,0) &  \\ \hline
            A3 & (2,5,8,4,0,2) & (1,1,64,0,0,0) & $2^8$ \\
               &               & (2,2,17,2,-2,0) & \\
               &               & (1,4,17,-4,0,0) & \\ \hline
            A4 & (4,4,5,0,4,0) & (1,4,16,0,0,0) & $2^8$ \\ \hline
            A5 & (4,9,9,2,4,4) & (1,16,16,0,0,0) & $2^{10}$ \\ \hline
            A6 & (4,5,13,2,0,0) & (1,16,20,-16,0,0) & $2^{10}$ \\
               &               & (4,5,17,2,-4,-4) & \\ \hline
            A7 & (5,8,8,0,4,4) & (1,4,64,0,0,0) & $2^{10}$  \\
               &               & (4,4,17,0,4,0) & \\ \hline
            A8 & (4,8,17,0,4,0) & (1,8,64,0,0,0) & $2^{11}$ \\ \hline
            A9 & (9,9,16,8,8,2) & (1,16,64,0,0,0) & $2^{12}$ \\
               &               & (4,16,17,0,-4,0) & \\ \hline
            A10 & (4,9,32,0,0,4) & (1,32,32,0,0,0) & $2^{12}$  \\
               &                 & (4,17,17,2,4,4) & \\ \hline
            A11 & (5,13,16,0,0,2) & (4,16,21,16,4,0) & $2^{12}$ \\
                &                 & (4,5,64,0,0,-4) & \\ \hline
            A12 & (9,17,32,-8,8,6) & (1,16,256,0,0,0) & $2^{14}$ \\
                &                  & (16,16,17,-8,0,0) &  \\
                &                  & (4,16,65,0,4,0) &  \\ \hline
            A13 & (9,16,36,16,4,8) & (1,64,64,0,0,0) & $2^{14}$ \\
                &                  & (4,33,33,2,4,4) &  \\
                &                  & (4,17,64,0,0,-4) &  \\
                \hline
        \end{tabular}
    \end{center}
    \caption{Genera containing spinor regular ternaries with 2-power discriminant} \label{Group A}
    \end{table}

For each of the forms A1--A13, the data needed to determine the candidates for spinor exceptional integers for their genus is given in Table~\ref{Data for Group A}. The second column of the table gives the splitting of $L_2$ for the ternary lattice $L$ corresponding to the form. These splittings can be obtained by routine calculation using one of the representative forms in the genus. In all cases, the $2$-adic splitting is of the type \[\qf{b_1,b_22^r,b_32^s},\] where $b_i\in \un{2}$ and $r \leq s$ are nonnegative integers. The third column contains the local spinor norm group $\sn{2}$ for each lattice $L$. For the lattices for which the splitting of $L_2$ has $0<r<s$ (that is, A4, A6, A7, A8, A9, A11 and A12), the corresponding spinor norm groups $\sn{2}$ can be determined by Propositions 1.4, 1.6, 1.7 and 1.8 and Theorem 2.7 of \cite{EHs}. For the remainder of the cases, $L_2$ is split by a multiple of $\qf{1,1}$. Here $\theta(O^+(\qf{1,1}) = \{\gamma \in \dot{Q_2} : (\gamma, -1)_2=+1\} = \{1,2,5,10\}\sq{2}$ by Proposition B of \cite{H} \footnote{Throughout the paper, $\{a_1,\ldots,a_k\}\dot{F}^2$ will be used to denote $a_1\dot{F}^2 \cup\cdots \cup a_k\dot{F}^2$.}. Then $\sn{2}=\{1,2,5,10\}\sq{2}$ follows from Theorem 3.14(iv) of \cite{EHs}. The value of $\lambda$ in Lemma~\ref{prime 2} can be determined by one of the subcases of \cite[Satz 4]{SP80}. For each form, the specific subcase that applies is identified in the fourth column of the table, and the value of $\lambda$ appears in the last column.

\begin{table}[h]
    \begin{center}
        \begin{tabular}{| c | c | c | l | c |}
        \hline
        \# & $L_2$ & $\sn{2}$ & subcase & $\lambda$  \\ \hline \hline
        A1 & $\qf{1,1,2^4}$ & $\{1,2,5,10\}\sq{2}$ & (b)(iii) & 1 \\ \hline
        A2 & $\qf{1,1,2^5}$ & $\{1,2,5,10\}\sq{2}$ & (b)(iii) & 2 \\ \hline
        A3 & $\qf{1,1,2^6}$ & $\{1,2,5,10\}\sq{2}$ & (b)(iii) & 3  \\ \hline
        A4 & $\qf{1,2^2,2^4}$ & $\{1,5\}\sq{2}$ & (b)(iii) & 1  \\ \hline
        A5 & $\qf{1,2^4,2^4}$ & $\{1,2,5,10\}\sq{2}$ & (b)(i) & 1 \\ \hline
        A6 & $\qf{5,2^2,5\cdot 2^6}$ & $\{1,5\}\sq{2}$ & (b)(ii) & 1 \\ \hline
        A7 & $\qf{1,2^2,2^6}$ & $\{1,5\}\sq{2}$ & (b)(iii) & 3 \\ \hline
        A8 & $\qf{1,2^3,2^6}$ & $\{1,2,3,6\}\sq{2}$ & (c)(iii) & 1 \\ \hline
        A9 & $\qf{1,2^4,2^6}$ & $\{1,5\}\sq{2}$ & (b)(iii) & 3  \\ \hline
        A10 & $\qf{1,2^5,2^5}$ & $\{1,2,5,10\}\sq{2}$ & (b)(iv) & 1 \\ \hline
        A11 & $\qf{5,2^4,5\cdot 2^6}$ & $\{1,5\}\sq{2}$ & (b)(ii) & 3 \\ \hline
        A12 & $\qf{1,2^4,2^8}$ & $\{1,5\}\sq{2}$ & (b)(iii) & 5   \\ \hline
        A13 & $\qf{1,2^6,2^6}$ & $\{1,2,5,10\}\sq{2}$ & (b)(i) & 3  \\ \hline

        \end{tabular}
    \end{center}
    \caption{Data for A1--A13}\label{Data for Group A}
\end{table}

\begin{proposition} Let $f$ be one of the forms A1--A13, and let $n$ be a positive integer. Then $n$ is represented everywhere locally, but not globally, by $f$ if and only if $n$ lies in:
\begin{itemize}
    \item[(i)]$M_1^2$ for A1, A4, A5, A6, A10;
    \item[(ii)]$2M_1^2$ for A2;
    \item[(iii)]$M_1^2,4M_1^2$ for A3, A7, A9, A13;
    \item[(iv)]$M_2^2$ for A8;
    \item[(v)]$4M_1^2$ for A11;
    \item[(vi)]$M_1^2, 4M_1^2, 16M_1^2$ for A12.
\end{itemize}
\end{proposition}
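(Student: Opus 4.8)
The plan is to reduce the global statement to the determination of spinor exceptional integers and then run the local machinery of Section 3. Since the spinor regular form $f$ is alone in Spinor Genus I (SGI), we have $n\repd f$ if and only if $n\repd\spn$, so the integers represented everywhere locally but not globally by $f$ are exactly those $n$ with $n\repd\gen$ and $n\nrepd\spn$. As each genus in Table~\ref{Group A} has precisely two spinor genera, these are exactly the spinor exceptional integers for $\gen$ that are represented by Spinor Genus II (SGII) rather than SGI. Thus I would (a) use Theorem~\ref{SP criteria} to find all spinor exceptional integers, and (b) check that every one of them is represented by SGII, so that the exceptional set of $f$ is the full set found in (a) and not a proper subset. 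For (b) I expect the explicit SGII representatives in Table~\ref{Group A} to suffice: each begins with a binary section (often $x^2+y^2$) that already represents the relevant square classes, since, for example, every element of $M_1^2$ is a sum of two squares and so is represented by the SGII forms of shape $\qf{1,1,2^k}$.

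For step (a) I would apply Theorem~\ref{SP criteria} prime by prime, using that $\Delta=2^e$ is a power of $2$, so that the only prime dividing $2\Delta$ is $2$ and every odd prime fails to divide $2\Delta$. At each odd $p$, Lemma~\ref{even order} shows that $\sn{p}\subseteq\norms{p}$ forces $\ord_p(n)$ to be even; hence any spinor exceptional $n$ has the form $n=2^a w^2$ with $w$ odd. The relative spinor norm conditions at the odd primes are then packaged by Corollary~\ref{role of Mt}: one has $\theta(L_p,n)=\norms{p}$ for all odd $p$ exactly when $w\in M_{\sqf{n\Delta}}$. Because the odd part of $n$ is the square $w^2\equiv 1\pmod 8$, we get $\sqf{n\Delta}=\sqf{2^{a+e}}$, which is $1$ or $2$ according to the parity of $a+e$; this pins the squareclass representative down to $M_1$ or $M_2$.

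The heart of the argument, and where I expect the main obstacle, is the analysis at $p=2$, where three conditions must be combined (local representability at odd $p$ being automatic, as $L_p$ is unimodular of rank three there). First, local representability $n\repd L_2$, required since Theorem~\ref{SP criteria} presupposes $n\repd\gen$, restricts the admissible values of $a=\ord_2(n)$; this is the decisive constraint in some cases, for example it removes the class $M_1^2$ for A11 because $L_2\cong\qf{5,2^4,5\cdot 2^6}$ represents no odd square unit. Second, reading $\sn{2}$ from Table~\ref{Data for Group A} and noting that $-n\Delta\equiv -2^{a+e}\pmod{\sq{2}}$ is never a square, the condition $\sn{2}\subseteq\norms{2}$ forces $a+e$ into a fixed parity class determined by $\sn{2}$, which fixes $\sqf{n\Delta}\in\{1,2\}$ and hence selects $M_1$ or $M_2$ consistently with the previous paragraph. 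Third, since $-n\Delta\notin\sq{2}$ in every case, Lemma~\ref{prime 2} gives $\theta(L_2,n)=\norms{2}$ if and only if $\ord_2(n)\le\lambda$, with $\lambda$ read from Table~\ref{Data for Group A}. Intersecting these three conditions leaves a finite set of admissible $a$; attaching the factor $M_1$ or $M_2$ and the square from $w^2$ produces the listed square classes. For instance, for A3 one has $e=8$ and $\lambda=3$, the parity condition forces $a$ even, and $a\le 3$ then gives $a\in\{0,2\}$, yielding $M_1^2\cup 4M_1^2$.

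The bulk of the work, and the principal obstacle, is the case-by-case bookkeeping at $p=2$: correctly determining local representability for each Jordan splitting, evaluating the Hilbert symbol condition $\sn{2}\subseteq\norms{2}$ for each of the spinor norm groups in Table~\ref{Data for Group A}, and verifying that the three constraints are jointly consistent so that no admissible value of $a$ is spuriously gained or lost. A secondary but essential point is step (b): confirming that SGII represents every spinor exceptional integer, so that the computed set is exactly the exceptional set of $f$; this I would dispatch using the explicit SGII representatives, for which the required square classes are represented directly by their binary sections.
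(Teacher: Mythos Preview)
Your proposal is correct and follows essentially the same route as the paper: reduce to spinor exceptional integers via spinor regularity, apply Theorem~\ref{SP criteria} prime by prime using Lemmas~\ref{even order} and \ref{relative spinor norms} (packaged as Corollary~\ref{role of Mt}) at odd primes and Lemma~\ref{prime 2} together with the data in Table~\ref{Data for Group A} at $p=2$, and then verify via the explicit SGII representatives that each spinor exceptional square class is represented by SGII. Your framing of the $2$-adic analysis as the intersection of three constraints (local representability, $\sn{2}\subseteq\norms{2}$, and $\ord_2(n)\le\lambda$) is the same content as the paper's forward/reverse argument, and your treatment of A11 via $1\nrepd L_2$ matches the paper's observation that $1\nrepd\gen$ there.
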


\begin{proof} First consider the lattice $L$ corresponding to the form A1. In this case, we seek to show that \[n\nrepd L \iff n\in M_1^2.\] We first prove the forward implication. From $n\nrepd L$ it follows that $n\nrepd \spn$, since $L$ is spinor regular. So $n$ is a spinor exceptional integer for $\gen$. So by Theorem~\ref{SP criteria} and Lemma~\ref{even order} we see that $\ord_p(n)$ is even for all odd $p$; thus, \[n=2^tw^2,\] with $w$ odd.  As $\Delta=2^6$, we then have $\sqf{n\Delta} = 1$ or $2$, depending upon whether $t$ is even or odd, respectively. Since $\sn{2}\subseteq \norms{2}$ and $5\in \sn{2}\smallsetminus N_2(-2)$, it must be that $t$ is even and $M_{n\Delta}=M_1$. By Lemma~\ref{relative spinor norms} it follows that $w\in M_1$. Since $-1\notin \sq{2}$, it follows from Lemma~\ref{prime 2} that \[t=\ord_2(n)\leq \lambda = 1.\] Since $t$ is even, this implies that $t=0$. Hence, $n\in M_1^2$ as claimed.

For the reverse implication, assume that $n\in M_1^2$. So $n=w^2$, $w\in M_1$ and $\sqf{n\Delta}=1$. Note that $w$ is odd since $2\notin M_1$, as $-1\notin \sq{2}$. For $p$ odd, $\ord_p(n)$ even implies that $\sn{p}\subseteq \norms{p}$, by Lemma~\ref{even order}, and $\theta(L_p, n) = \norms{p}$ by Lemma~\ref{role of Mt}. Also $\sn{2} \subseteq N_2(-1)$. Since $\ord_2(n)=0$, it follows from Lemma~\ref{prime 2} that $\theta(L_2,n)=N_2(-n\Delta)$. By Theorem~\ref{SP criteria}, $n$ is a spinor exceptional integer for $\gen$. The form $\qf{1,1,16}$ lies in SGII for the genus of A1. So $1\repd SGII$ and hence $n\repd SGII$ since $n$ is a square. But then it must be that $n\nrepd SGI$ and so $n\nrepd L$. This completes the proof for A1.

The proofs for the forms A4, A5, A6 and A10 are identical. For A3, A7, A9, A11 and A13, the proof remains the same except that $\lambda =3$ for these cases. Thus the conditions for $\theta(L_2,n)=N_2(-n\Delta)$ in Lemma~\ref{prime 2} hold also with $t=2$, and integers of the type $4M_1^2$ are also spinor exceptional integers for these genera. Note that for A11, $1\nrepd \gen$, so the integers in $M_1^2$ need not be listed among the spinor exceptional integers. In this one instance, $4\repd SGII$, and so all integers of the type $4M_1^2$ are represented by SGII, and hence not by $L$. For the form A12, the proof is again unchanged except that in this case $\lambda =5$; thus the conditions for $\theta(L_2,n)=N_2(-n\Delta)$ in Lemma~\ref{prime 2} hold also with $t=4$, and the integers of the type $M_1^2, 4M_1^2$ and $16M_4^2$ are spinor exceptional integers for the genus.

This leaves two remaining cases, A2 and A8. The only difference for the proof of A2 is that $\ord_2(\Delta)$ is odd. Since the other aspects of the proof remain the same, including that $\sqf{n\Delta}=1$, we see in this case that $n = 2^tw^2$, where $t$ and $w$ are odd integers. As $\lambda = 1$, it must be that $t=1$ and so $n=2w^2$. Here $2\repd SGII$, so we conclude that no elements of the type $2M_1^2$ are represented by $L$.

Finally, in the case of A8, we have $\sn{2}=\{1,2,3,6\}\sq{2}$. So $\sn{p}\subseteq \norms{p}$ holds for all $p$ if and only if $\sqf{n\Delta}=2$. Hence $\sqf{n}=1$  for any spinor exceptional integer $n$ for this genus, since and $\ord_2(\Delta)=11$ is odd. As $\lambda =1$, the spinor exceptional integers for the genus must be of the form $w^2$, for some odd integer $w$. In this case, the prime divisors $p$ of $w$ must satisfy the condition that $-2\in \sq{p}$. Hence, $w\in M_2$, as asserted. \end{proof}

\begin{remark} The set $M_1$ appearing in the statement of the proposition is generated by 1 and the primes congruent to 1 modulo 4, and the set $M_2$ is generated by 1 and the primes congruent to 1 or 3 modulo 8. \end{remark}

\section{Discriminants divisible by 2 and 3}

The spinor regular ternaries with discriminants divisible by both 2 and 3 are labelled as B1--B12 in \cite{A}. Table~\ref{Group B} contains a list of representatives for all of the classes of forms in the genus of each of these forms, along with their separation into spinor genara and their discriminant. In each case, as for the previous grouping, the genus splits into two spinor genera, which are labelled as before. The spinor regular forms B4 and B11 are the first forms listed in SGI for their genus.

\begin{table}[h]
    \begin{center}
        \begin{tabular}{| c | l | l | c |}
            \hline
            \# & Spinor Genus I & Spinor Genus II & $\Delta$ \\ \hline \hline
            B1 & (3,3,4,0,0,3) & (1,1,36,0,0,1) & $2^23^3$ \\ \hline
            B2 & (3,4,4,4,3,3) & (1,3,10,-3,1,0) & $2^23^3$ \\ \hline
            B3 & (1,7,12,0,0,1) & (3,3,13,-3,3,-3) & $2^23^4$  \\
                &               & (1,1,108,0,0,1) &  \\ \hline
            B4 & (3,7,7,5,3,3) & (1,1,144,0,0,1) & $2^43^3$ \\
               & (3,3,16,0,0,-3) & (1,3,37,3,1,0) &  \\ \hline
            B5 & (4,4,9,0,0,4) & (1,12,12,12,0,0) & $2^43^3$  \\ \hline
            B6 & (3,4,9,0,0,0) & (1,3,36,0,0,0) & $2^43^3$ \\ \hline
            B7 & (4,9,12,0,0,0) & (1,12,36,0,0,0) & $2^63^3$  \\ \hline
            B8 & (4,9,28,0,4,0) & (1,36,36,-36,0,0) & $2^43^5$  \\
                &               & (9,13,13,-10,-6,-6) &  \\ \hline
            B9 & (9,16,16,16,0,0) & (1,48,48,-48,0,0) & $2^83^3$ \\ \hline
            B10 & (13,13,16,-8,8,10) & (4,13,37,-2,4,-4) & $2^83^3$ \\ \hline
            B11 & (9,16,48,0,0,0) & (1,48,144,0,0,0) & $2^{10}3^3$ \\
                & (16,25,25,-14,16,-16) & (4,49,49,-46,4,4)& \\ \hline
            B12 & (9,16,112,16,0,0) & (1,144,144,144,0,0) & $2^83^5$ \\
                &                   & (9,49,49,-46,6,6) & \\ \hline
        \end{tabular}
    \end{center}
    \caption{Genera containing spinor regular ternaries with discriminant divisible by 2 and 3}\label{Group B}
\end{table}

The data necessary to determine the spinor exceptional integers for these genera is summarized in Table~\ref{Data for Group B}. We first record the local splittings at the primes 2 and 3 for the corresponding lattices. The splittings for $L_3$ are given in the second column of Table~\ref{Data for Group B}. In every case, it follows from \cite[Satz 3]{K56} that \[\sn{3}=\{1,3\}\sq{3}.\]The $2$-adic splittings appear in the third column. Some additional explanation is in order here. This grouping contains both classic and non-classic forms. For the non-classic forms B1--B4, the splitting given in the table is for the lattice $L_2'$ obtained by scaling $L_2$ by $2$. For the remaining forms B5--B12, which are classic, the splitting of $L_2'=L_2$ is given. Since the spinor norm group $\sn{2}$ is unaffected by scaling, this makes it possible to directly apply the results of \cite{EHs} to determine $\sn{2}$ from $L_2'$ in all cases. We use the notations $\Hy$ and $\A$ to denote the binary $\Z_2$-lattices with Gram matrices $\left(\begin{smallmatrix} 0 & 1 \\1 & 0\end{smallmatrix}\right)$ and $\left(\begin{smallmatrix} 2 & 1 \\1 & 2\end{smallmatrix}\right)$, respectively. If $M$ is one of these matrices and $\alpha\in\Z$, $\alpha M$ will denote the matrix resulting from multiplying each entry of $M$ by $\alpha$. In all cases occurring in Table~\ref{Data for Group B}, $L_2'$ has a binary Jordan component either of odd order or of even order (see \cite[Definition 3.1, p. 79]{EHs} for an explanation of this terminology), and the complementary component has the same order. It follows from Theorem 3.14(i) of \cite{EHs} that in all cases \[\sn{2}=\un{2}\sq{2}.\] The value of $\lambda$ in Lemma~\ref{prime 2} can be determined by one of the subcases of \cite[Satz 4(a)]{SP80}. For each form, the specific subcase that applies is identified in the fourth column of the table, and the value of $\lambda$ appears in the last column.

\begin{table}[h]
    \begin{center}
        \begin{tabular}{| c | c | c | l | c |}
        \hline
        \# & $L_3$ & $L'_2$  & subcase & $\lambda$  \\ \hline \hline
         B1 & $\qf{1,3,3^2}$ & $\A \perp \qf{2^3}$ & (ii)($\beta$) & 1 \\ \hline
         B2 & $\qf{1,3,3^2}$  & $\Hy \perp \qf{5\cdot 2^3}$ &  (ii)($\alpha$) & 1 \\ \hline
         B3 & $\qf{1,3,3^3}$ & $\A \perp \qf{3\cdot 2^3}$ & (ii)($\beta$) & 1 \\ \hline
         B4 & $\qf{1,3,3^2}$ & $\A \perp \qf{2^5}$ & (ii)($\beta$) & 3 \\ \hline
         B5 & $\qf{1,3,3^2}$ & $\qf{1}\perp 2\A$ & (ii)($\gamma$) & 1 \\ \hline
         B6 & $\qf{1,3,3^2}$ & $\qf{1,3,2^2}$ & (i)($\beta$) & 1 \\ \hline
         B7 & $\qf{1,3,3^2}$ & $\qf{1,2^2,3\cdot 2^2}$ & (i)($\alpha$) & 1 \\ \hline
         B8 & $\qf{1,3^2,3^3}$ & $\qf{1}\perp 2\A$ & (ii)($\gamma$) & 1 \\ \hline
         B9 & $\qf{1,3,3^2}$  & $\qf{1}\perp 2^3\A$ & (ii)($\gamma$) & 3 \\ \hline
         B10 & $\qf{1,3,3^2}$ & $\qf{5}\perp 2^3\Hy$ & (ii)($\gamma$) & 3 \\ \hline
         B11 & $\qf{1,3,3^2}$ & $\qf{1,2^4,3\cdot 2^4}$ & (i)($\alpha$) & 3 \\ \hline
         B12 & $\qf{1,3^2,3^3}$ & $\qf{1}\perp 2^3\A$ & (ii)($\gamma$) & 3 \\ \hline

        \end{tabular}
    \end{center}
    \caption{Data for B1--B12}\label{Data for Group B}
\end{table}

\begin{proposition} Let $f$ be one of the forms B1--B12, and let $n$ be a positive integer. Then $n$ is represented everywhere locally, but not globally, by $f$ if and only if $n$ lies in:
\begin{itemize}
    \item[(i)]$M_3^2$ for B1, B2, B5, B6, B7, B8;
    \item[(ii)]$3M_3^2$ for B3;
    \item[(iii)]$M_3^2,4M_3^2$ for B4, B9, B11, B12;
    \item[(iv)]$4M_3^2$ for B10.
\end{itemize}
\end{proposition}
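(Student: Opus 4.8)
The plan is to follow the template established in the proof of Proposition~4.1 for the A-forms, adapting it to accommodate the prime $3$ in addition to the prime $2$. For each form $f$ in the list, the spinor regularity of $f$ means that $n \nrepd L$ forces $n \nrepd \spn$, so $n$ is a spinor exceptional integer for $\gen$; conversely, once $n$ is shown to be a spinor exceptional integer, the fact that the spinor regular form is alone in SGI (and that the relevant small square is represented by SGII) will give $n \nrepd L$. Thus the core task is to characterize the spinor exceptional integers via Theorem~\ref{SP criteria}, checking the two local conditions $\sn{p} \subseteq \norms{p}$ and $\theta(L_p,n) = \norms{p}$ at every prime.

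First I would handle the generic primes $p \nmid 2\Delta = 2^a 3^b$. By Lemma~\ref{even order}, the condition $\sn{p} \subseteq \norms{p}$ at all such $p$ forces $\ord_p(n)$ to be even, so $n = 3^t 2^s w^2$ with $\gcd(w,6)=1$; and by Corollary~\ref{role of Mt}, the relative spinor norm condition at these primes becomes $w \in M_{n\Delta}$. Next I would impose the conditions at $p=3$. Since $\sn{3} = \{1,3\}\sq{3}$ in every case (from Table~\ref{Data for Group B}), the requirement $\sn{3} \subseteq N_3(-n\Delta)$ pins down $\sqf{n\Delta}$ modulo squares at $3$, forcing $-n\Delta \in \{1,3\}\sq{3}^{\perp}$-type conditions; combined with $\ord_3(\Delta)=b$ this determines the allowed value of $t = \ord_3(n)$ and shows $M_{n\Delta} = M_3$. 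The odd-prime analogue Lemma~\ref{odd divisors}, applied with the splitting $L_3 \cong \qf{1,3,3^2}$ or $\qf{1,3^2,3^3}$ from the table, then caps $\ord_3(n)$ and confirms the relative spinor norm condition holds at $3$.

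Then I would impose the conditions at $p=2$. Here $\sn{2} = \un{2}\sq{2}$ in all cases, so $\sn{2} \subseteq N_2(-n\Delta)$ holds precisely when $\ord_2(n\Delta)$ is even (Lemma~\ref{containing units}); since $\ord_2(\Delta)$ is known from $\Delta$, this fixes the parity of $s = \ord_2(n)$. Lemma~\ref{prime 2}, with the value of $\lambda$ read off from the last column of Table~\ref{Data for Group B}, then gives $\theta(L_2,n) = N_2(-n\Delta) \iff \ord_2(n) \leq \lambda$. Combining the parity constraint with the cap $s \leq \lambda$ yields the permissible values of $s$: for $\lambda=1$ one gets $s=0$ (or $s=1$ when $\ord_2(\Delta)$ is odd, as in B3), while $\lambda=3$ additionally permits $s=2$, producing the $4M_3^2$ squareclass. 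Assembling the constraints on $t$, $s$, and $w$ gives exactly the squareclasses asserted in (i)--(iv). The forward direction is then complete, and the converse follows by checking that for $n$ in each listed squareclass all three local conditions are met and that SGII represents the base square, forcing $n \nrepd L$.

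The main obstacle I anticipate is the bookkeeping for the prime $3$: unlike the A-forms where only $2$ divides $\Delta$, here one must simultaneously track $\sqf{n\Delta}$ at both $2$ and $3$, and the condition $\sn{3} \subseteq N_3(-n\Delta)$ together with $\sn{2} \subseteq N_2(-n\Delta)$ must be jointly consistent. In particular, verifying that the surviving squareclass is governed by $M_3$ (rather than $M_1$ or $M_6$) requires checking that the Hilbert symbol conditions $(\gamma,-n\Delta)_3 = +1$ and $(\gamma,-n\Delta)_2=+1$ force $-\sqf{n\Delta}$ into the field $\Q(\sqrt{-3})$, so that $M_{n\Delta} = M_3$; the exceptional forms B3 (where $\ord_2(\Delta)$ is odd, forcing $s$ odd and giving $3M_3^2$) and B10 (where $1 \nrepd \gen$ so the $M_3^2$ class is vacuous and $4$ is represented by SGII) are the cases where this interplay departs from the uniform pattern and will need separate attention, exactly as A2, A8, and A11 did in the previous section.
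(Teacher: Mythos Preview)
Your overall strategy matches the paper's proof almost exactly: reduce to characterizing spinor exceptional integers via Theorem~\ref{SP criteria}, use Lemmas~\ref{even order} and~\ref{containing units} to force even orders at primes away from $3$, pin down $\sqf{n\Delta}=3$ from $\sn{3}=\{1,3\}\sq{3}$, and then bound $\ord_2(n)$ and $\ord_3(n)$ using Lemmas~\ref{prime 2} and~\ref{odd divisors} with the data in Table~\ref{Data for Group B}. The treatment of B10 (excluding $M_3^2$ because $1\nrepd L_2\cong\qf{5}\perp 2^3\Hy$) is also correct and matches the paper.

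However, your handling of B3 contains a concrete error. You attribute the factor $3$ in $3M_3^2$ to the prime $2$, writing that for B3 ``$\ord_2(\Delta)$ is odd, forcing $s$ odd.'' In fact $\Delta=2^23^4$ for B3, so $\ord_2(\Delta)=2$ is even and $s=\ord_2(n)$ must be even; with $\lambda=1$ this gives $s=0$, not $s=1$. The distinguishing feature of B3 is that $\ord_3(\Delta)=4$ is \emph{even}, so the requirement $\sqf{n\Delta}=3$ forces $t=\ord_3(n)$ to be \emph{odd}. Moreover, B3 has $L_3\cong\qf{1,3,3^3}$, which is neither of the two $3$-adic splittings you listed; it falls under Lemma~\ref{odd divisors}(ii), giving $\ord_3(n)\leq 2$, and together with $t$ odd this forces $t=1$. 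That is the source of the factor $3$. You should revise your plan so that the exceptional behavior of B3 is traced to the prime $3$ (even $3$-adic order of $\Delta$ and the splitting $\qf{1,3,3^3}$), not to the prime $2$.
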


\begin{proof} First consider the lattice $L$ corresponding to the form B1. Assume first that $n\repd \gen$ but $n\nrepd L$. Then $n \nrepd \spn$, since $L$ is spinor regular. So $n$ is a spinor exceptional integer for $\gen$. From Theorem~\ref{SP criteria}, Lemma~\ref{even order}, the computation of $\sn{2}$, and Lemma~\ref{containing units}, it follows that $\ord_p(n)$ is even for all $p\neq 3$. So $\sqf{n\Delta}=1$ or $3$. Since $\sn{3} \not\subseteq N_3(-1)$, it must be that $\sqf{n\Delta}=3$. So $n=2^t3^sw^2$ with $s,t$ even and $w\in M_3$, by Lemma~\ref{role of Mt}. By Lemma~\ref{odd divisors} and Lemma~\ref{prime 2}, $\ord_p(n)\leq 1$ for $p=2,3$. Hence, $s=t=0$ and the conclusion follows.

For the reverse implication, assume that $n=w^2$ with $w\in M_3$. The lattice corresponding to $(1,1,36,0,0,1)$ lies in SGII and represents $1$, and so $n\repd SGII$; in particular, $n\repd \gen$. Since $\Delta=2^43^3$, we have $\sqf{n\Delta}=3$; thus, $\norms{p}=N_p(-3)$ and $M_{n\Delta}=M_3$. Since $\ord_p(n\Delta)$ is even for $p\neq 2,3$, it follows from Lemma~\ref{even order} that $\sn{p}\subseteq \norms{p}$, and, by Corollary~\ref{role of Mt}, $\theta(L_p,n)=\norms{p}$ for all $p\neq 2,3$. For the primes $2$ and $3$, the direct computations show that $\sn{2} = \un{2}\sq{2}=N_2(-3)=\norms{2}$ and $\sn{3}=\{1,3\}\sq{3}=N_3(-3)=\norms{3}$. It then follows by Corollary~\ref{order 0} that $\theta(L_p,n)=\norms{p}$. So the criteria of Theorem~\ref{SP criteria} are met and $n$ is a spinor exceptional integer for $\gen$. So $n\nrepd SGI$ and, in particular, $n\nrepd L$. This completes the proof for B1.

The proofs for the forms B2, B5, B6, B7 and B8 proceed in exactly the same way. For B4, B9, B11 and B12, the proof is analogous except that $\lambda =3$ for these cases. Thus the conditions for $\theta(L_2,n)=N_2(-n\Delta)$ in Lemma~\ref{prime 2} hold also with $t=2$, and integers of the type $4M_3^2$ are also spinor exceptional integers for these genera.

This leaves two remaining cases, B3 and B10. The only difference for the proof of B3 is that $\ord_3(\Delta)$ is even. Since the other aspects of the proof remain the same, including that $\sqf{n\Delta}=1$, we see in this case that if $n$ is a spinor exceptional integer for $\gen$, then $n = 2^t3^sw^2$, where $t$ is even, $s$ is odd, and $w\in M_3$. As $\lambda = 1$, it must be that $t=0$, and it follows from Lemma~\ref{odd divisors} that $s=1$; so $n=3w^2$, $w\in M_3$. In this case, $3\repd SGII$, so we conclude that no elements of the type $3M_3^2$ are represented by $L$.

For the case B10, the proof of the forward implication proceeds as before up to the point where we conclude that $n=2^tw^2$ with $t$ even and $w\in M_3$. In this case, $L_2\cong \qf{5}\perp 2^3\A$. So the assumption that $n\repd L_2$ implies that $t \neq 0$. Hence, $n\in 4M_3^2$, as claimed. For the reverse implication, observe that $4\repd SGII$, and so $4M_3^2 \repd SGII$. The remainder of the argument then proceeds as before. \end{proof}

\begin{remark} The set $M_3$ appearing in the statement of the proposition is generated by 1 and the primes congruent to 1 modulo 3. \end{remark}

\section{Discriminants divisible by 2 and 7}

The spinor regular ternaries with discriminants divisible by 2 and 7 are labelled as C1--C4 in \cite{A}. Table~\ref{Group C} contains a list of representatives for all of the classes of forms in the genus of each of these forms, along with their separation into spinor genara and their discriminant. In each case, as for the previous two groupings, the genus splits into two spinor genera, which are labelled as before.

\begin{table}[h]
    \begin{center}
        \begin{tabular}{| c | l | l | c | c |}
            \hline
            \# & Spinor Genus I & Spinor Genus II & $\Delta$ & $L'_2$ \\ \hline \hline
            C1 & (2,7,8,7,1,0) & (1,7,14,7,0,0) & $2^27^3$ & $\Hy \perp \qf{2}$ \\
               &               & (1,2,49,0,0,1) & & \\ \hline
            C2 & (7,8,9,6,7,0) & (4,7,15,-7,4,0) & $2^47^3$ & $\A\perp \qf{5\cdot 2^3}$ \\
               &               & (1,7,51,-7,-1,0) & & \\ \hline
            C3 & (8,9,25,2,4,8) & (1,28,56,-28,0,0) & $2^67^3$ & $\qf{1}\perp 2\Hy$ \\
               &                & (4,8,49,0,0,4) & & \\ \hline
            C4 & (29,32,36,32,12,24) & (4,29,197,-2,-4,4) & $2^{10}7^3$ & $\qf{5}\perp 2^3\A$ \\
               &                     & (16,32,53,-8,16,-16) & & \\ \hline
        \end{tabular}
    \end{center}
    \caption{Genera containing spinor regular ternaries with discriminant divisible by 2 and 7}\label{Group C}
\end{table}

For each lattice $L$ corresponding to one of the forms C1--C4, we have $L_7\cong \qf{1,7,7^2}$, and it follows from \cite[Satz 3]{K56} that \[\sn{7}=\{1,7\}\sq{7}=N_7(-7).\] The $2$-adic splittings for these lattices are given in the last column of Table~\ref{Group C}. The forms C1 and C2 are non-classic and the splittings listed for them are for the lattice $L'_2$ obtained from $L_2$ by scaling by 2; for C3 and C4, which are classic, the splittings listed are for $L'_2=L_2$. In all cases, it follows from \cite[Theorem 3.14(i)]{EHs} that \[\sn{2} = \un{2}\sq{2}.\]

\begin{proposition} Let $f$ be one of the forms C1--C4, and let $n$ be a positive integer. Then $n$ is represented everywhere locally, but not globally, by $f$ if and only if $n$ lies in:
\begin{itemize}
    \item[(i)] $M_7^2$ for C1, C2, C3;
    \item[(ii)] $4M_7^2$ for C4.
\end{itemize}
\end{proposition}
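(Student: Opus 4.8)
The plan is to follow the template already established for the A-forms and B-forms, since the C-forms share the same structural features: in each case the genus splits into two spinor genera, the spinor regular form is alone in SGI, and the local spinor norm data has been assembled in the preceding paragraph. Concretely, $\sn{2}=\un{2}\sq{2}$ for all four forms, $\sn{7}=\{1,7\}\sq{7}=N_7(-7)$, and $\Delta=2^a7^3$ with $\ord_7(\Delta)=3$ odd. First I would invoke the now-standard forward implication: if $n\repd\gen$ but $n\nrepd L$, then spinor regularity gives $n\nrepd\spn$, so $n$ is a spinor exceptional integer for $\gen$. By Theorem~\ref{SP criteria} together with Lemma~\ref{even order} and Lemma~\ref{containing units}, $\ord_p(n)$ is even for all $p\neq 7$. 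Since $\ord_7(\Delta)=3$, this forces $\sqf{n\Delta}\in\{1,7\}$. Because $\sn{2}=\un{2}\sq{2}$ must lie in $\norms{2}$ and $\un{2}\sq{2}\not\subseteq N_2(-1)$ (the nontrivial unit classes fail the Hilbert-symbol condition against $-1$), the case $\sqf{n\Delta}=1$ is excluded, so $\sqf{n\Delta}=7$, $\norms{p}=N_p(-7)$, and $M_{n\Delta}=M_7$.

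With the squareclass pinned down, I would write $n=2^t7^sw^2$ with $t$ even, $s$ even, and $w\in M_7$ by Corollary~\ref{role of Mt}. The prime $7$ is handled by Lemma~\ref{odd divisors} using the splitting $L_7\cong\qf{1,7,7^2}$, which is case (i) with bound $\ord_7(n)\leq 1$; since $\ord_7(n)=s$ is even this forces $s=0$. The prime $2$ is handled by Lemma~\ref{prime 2}: the condition $\theta(L_2,n)=N_2(-n\Delta)$ holds iff $\ord_2(n)\leq\lambda$, where $\lambda$ is read off from the relevant subcase of \cite[Satz 4]{SP80} applied to the recorded $L_2'$. For C1, C2, C3, the splittings yield $\lambda=1$, so $t\leq 1$, and $t$ even forces $t=0$, giving $n\in M_7^2$. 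For C4, with $L_2\cong\qf{5}\perp 2^3\A$, I expect $\lambda=3$, allowing $t\in\{0,2\}$; but exactly as in the B10 argument, the local structure $\qf{5}\perp 2^3\A$ means $1\nrepd L_2$, so $n\repd L_2$ forces $t\neq 0$, hence $t=2$ and $n\in 4M_7^2$.

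For the reverse implications I would verify that each candidate squareclass consists of spinor exceptional integers and that these are represented by SGII. Fix $n$ in the asserted squareclass so that $\sqf{n\Delta}=7$. The primes $p\neq 2,7$ are dispatched by Lemma~\ref{even order} (giving $\sn{p}\subseteq\norms{p}$) and Corollary~\ref{role of Mt} (giving $\theta(L_p,n)=\norms{p}$, using $w\in M_7$). At $p=7$ one checks directly $\sn{7}=\{1,7\}\sq{7}=N_7(-7)=\norms{7}$, and at $p=2$ one checks $\sn{2}=\un{2}\sq{2}\subseteq N_2(-7)=\norms{2}$; since $\ord_2(n)=0$ for C1--C3 and the bound $\lambda\geq 1$ holds throughout, Corollary~\ref{order 0} gives $\theta(L_p,n)=\norms{p}$ at both primes (for C4 one instead reuses the $\ord_2(n)=2\leq\lambda=3$ input to Lemma~\ref{prime 2}). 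Theorem~\ref{SP criteria} then certifies $n$ as a spinor exceptional integer, so $n\nrepd SGI$ and in particular $n\nrepd L$. To complete the argument I would exhibit a form in SGII representing the base integer ($1$ for C1--C3, and $4$ for C4, as in B10), so that the full squareclass $cM_7^2\repd SGII\subseteq\gen$ confirms these integers are represented everywhere locally.

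The main obstacle is verifying the Hilbert-symbol claim $\un{2}\sq{2}\subseteq N_2(-7)$ but $\un{2}\sq{2}\not\subseteq N_2(-1)$, which is what selects $\sqf{n\Delta}=7$ over $\sqf{n\Delta}=1$: this requires computing $(\cdot,-7)_2$ across the four unit squareclasses $\{1,3,5,7\}\sq{2}$ and confirming all give $+1$, in contrast to the pairing against $-1$. The only other genuinely form-specific work is confirming the values of $\lambda$ from the $L_2'$ splittings via the subcases of \cite[Satz 4]{SP80}, and checking for C4 that the local obstruction $1\nrepd L_2$ indeed holds for $\qf{5}\perp 2^3\A$; both are routine but must be done for each form.
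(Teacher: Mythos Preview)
Your argument follows the A/B template but misses the key feature that distinguishes the C-group: $-7\equiv 1\pmod{8}$, so $-7\in\sq{2}$. This single fact breaks your handling of the prime $2$ in both directions. Lemma~\ref{prime 2} carries the explicit hypothesis $-n\Delta\notin\sq{2}$, which fails here since $\sqf{n\Delta}=7$; hence your appeal to a bound $\ord_2(n)\leq\lambda$ is unjustified, and indeed no such bound exists (for instance $n=4$ is a spinor exceptional integer for the genus of C1, contradicting your asserted $\lambda=1$). Likewise, because $2\in M_7$, an element of $M_7^2$ may have arbitrarily large even $2$-order, so your reverse-direction claim ``$\ord_2(n)=0$ for C1--C3'' is false in general, and your invocation of Corollary~\ref{order 0} does not cover all $n\in M_7^2$.

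The paper's proof exploits $-7\in\sq{2}$ directly: Lemma~\ref{local square} gives $\theta(L_2,n)=N_2(-n\Delta)=\dot{\Q}_2$ unconditionally, so the prime $2$ imposes no restriction on $\ord_2(n)$ at all. In the forward direction one then has $n=2^s w^2$ with $s$ even and $(w,14)=1$, $w\in M_7$; since $2\in M_7$, this gives $n=(2^{s/2}w)^2\in M_7^2$ for C1--C3, and $n=4(2^{(s-2)/2}w)^2\in 4M_7^2$ for C4 (where $s\geq 2$ comes from $n\repd L_2$, as you correctly noted). Your ``main obstacle'' of verifying $\un{2}\sq{2}\subseteq N_2(-7)$ dissolves once you see $N_2(-7)=\dot{\Q}_2$. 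Incidentally, the paper excludes $\sqf{n\Delta}=1$ via $\sn{7}\not\subseteq N_7(-1)$ rather than your route through $p=2$; both work.
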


\begin{proof} Let $L$ be a lattice corresponding to one of the forms C1-C3. Assume first that $n\repd \gen$ but $n\nrepd L$. As before, $n$ is a spinor exceptional integer for $\gen$. From Theorem~\ref{SP criteria}, Lemma~\ref{even order} and the computation of $\sn{2}$, it follows that $\ord_p(n\Delta)$ is even for all $p\neq 7$. So $\sqf{n\Delta}=1$ or $7$. Since $\sn{7} \not\subseteq N_7(-1)$, it must be that $\sqf{n\Delta}=7$. So $n=2^s7^tw^2$ with $s,t$ even and $w\in M_7$, by Lemma~\ref{role of Mt}. By Lemma~\ref{odd divisors}, $\ord_7(n)\leq 1$ and so $t=0$. Since $2\in M_7$, it follows that $n\in M_7^2$ as claimed.

For the reverse implication, assume that $n=w^2$ with $w\in M_7$. It can be seen from the representatives of SGII that $1\repd SGII$, and so $n\repd SGII$; in particular, $n\repd \gen$. Since $\ord_2(\Delta)$ is even and $\ord_7(\Delta)$ is odd, we have $\sqf{n\Delta}=7$; thus, $\norms{p}=N_p(-7)$ for all $p$, and $M_{n\Delta}=M_7$. Since $\ord_p(n\Delta)$ is even for $p\neq 2,7$, it follows from Lemma~\ref{even order} that $\sn{p}\subseteq \norms{p}$, and from Corollary~\ref{role of Mt}, that $\theta(L_p,n)=\norms{p}$ for all $p\neq 2,7$. Also, $\sn{7}=N_7(-7)=\norms{7}$ and $\ord_7(n)=0$ gives $\theta(L_7,n)=\norms{7}$, by Corollary~\ref{order 0}. Since $-7\in\sq{2}$, we have $\sn{2}=\un{2}\sq{2}\subseteq N_2(-7)=\norms{2}$, and, by Lemma~\ref{local square}, $\theta(L_7,n)=\norms{2}$. So the criteria of Theorem~\ref{SP criteria} are met and $n$ is a spinor exceptional integer for $\gen$. So $n\nrepd SGI$ and, in particular, $n\nrepd L$. This completes the proof for C1-C3.

For the case C4, the proof of the forward implication proceeds as above to the point where we conclude that $n=2^sw^2$ with $s$ even and $w\in M_7$. In this case, $L_2\cong \qf{5}\perp 2^3\A$. So the assumption that $n\repd L_2$ implies that $s\neq 0$. Hence, $n\in 4M_7^2$, as claimed. For the reverse implication, observe that $4\repd SGII$, and so $4M_7^2 \repd SGII$. The remainder of the argument then proceeds as before.    \end{proof}

\begin{remark} The set $M_7$ appearing in the statement of the proposition is generated by 1 and the primes congruent to 1, 2 or 4 modulo 7. \end{remark}

\section{Forms not alone in their spinor genus}

For each of the 27 spinor regular ternaries that are alone in their spinor genus, Aygin et al \cite{A} summarize in Table A.17 the list of all positive integers not represented by the form. As detailed in that paper, the results for several of these forms appeared earlier in work of Lomadze \cite{L} and Berkovich \cite{B}. However, for the remaining two forms which are not alone in their spinor genus, the method of \cite{A} yields only the even integers that fail to be represented. For completeness, we will state here the full results for these two forms, which are B4 and B11 in the list. Before stating the results, we will establish three lemmas needed to analyze the local obstructions to representation by these forms. 

\begin{lemma}\label{2-adic lemma1} Let $L$ be a ternary quadratic $\Z$-lattice such that $L_2\cong M\perp\qf{16}$, where $M$ is a lattice corresponding to $x^2+xy+y^2$, and let $n$ be a positive integer. Then $n\nrepd L_2$ if and only if $n=2+4\ell$ or $8+16\ell$, for some nonnegative integer $\ell$. \end{lemma}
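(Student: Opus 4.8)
The plan is to reduce the lemma to an explicit description, organized by the value of $\ord_2(n)$, of the $2$-adic integers represented by the quadratic form $Q(x,y,z)=x^2+xy+y^2+16z^2$ underlying $L_2\cong M\perp\qf{16}$. The first step is to pin down the values taken over $\Z_2$ by the binary part $x^2+xy+y^2$. I claim this form represents a nonzero $\gamma\in\Z_2$ precisely when $\ord_2(\gamma)$ is even. One direction is an elementary content argument: if $x^2+xy+y^2=\gamma\neq 0$, factor out the largest power of $2$ dividing both $x$ and $y$, writing $x=2^mx_0$, $y=2^my_0$ with $x_0,y_0$ not both even; reducing $x_0^2+x_0y_0+y_0^2$ modulo $2$ shows it is a unit, so $\ord_2(\gamma)=2m$ is even. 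Conversely, every unit is represented: for a unit $u$, Hensel's lemma applied to $f(x)=x^2+x+(1-u)$ (note $f(0)=1-u\in 2\Z_2$ and $f'(0)=1\in\un 2$) produces $x\in\Z_2$ with $x^2+x+1=u$, and scaling by powers of $2$ then yields every element of even order. Equivalently, $x^2+xy+y^2$ is the norm form of the unramified quadratic extension $\Z_2[\omega]$, whose norms are exactly the $2$-adic integers of even order.

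With this in hand, the representation behavior of $Q$ separates cleanly according to $\ord_2(n)$. Taking $z=0$ already shows that every $n$ with $\ord_2(n)$ even is represented by $L_2$, so these contribute nothing; this accounts for all even orders $0,2,4,\dots$. For the two excluded families I would argue that the summand $16z^2$ cannot help, because it lies in $16\Z_2$. If $\ord_2(n)=1$, then reducing $n=x^2+xy+y^2+16z^2$ modulo $4$ forces $x^2+xy+y^2\equiv 2\pmod 4$, i.e. $\ord_2(x^2+xy+y^2)=1$, contradicting the even-order description; hence $n\equiv 2\pmod 4$, i.e. $n=2+4\ell$, is never represented. Similarly, if $\ord_2(n)=3$, reducing modulo $16$ forces $x^2+xy+y^2\equiv 8\pmod{16}$, i.e. order $3$, again impossible; hence $n\equiv 8\pmod{16}$, i.e. $n=8+16\ell$, is never represented.

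The remaining and most substantial step is to show that every $n$ with $\ord_2(n)$ odd and at least $5$ \emph{is} represented, so that no further classes join the exceptional set; here the $\qf{16}$ component is essential, and getting the bookkeeping of orders right is the crux. Writing $\ord_2(n)=2r+1$ with $r\geq 2$, factor $n=4^r n_1$ with $\ord_2(n_1)=1$, and choose $z$ of order $r-2$, so that $16z^2=4^r z_0^2$ with $z_0\in\un 2$. Then $n-16z^2=4^r(n_1-z_0^2)$, and since $n_1$ is even while $z_0^2$ is odd, $n_1-z_0^2$ is a unit; thus $n-16z^2$ has even order $2r$ and is represented by $x^2+xy+y^2$ by the first step. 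This exhibits the desired representation of $n$.

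Combining the three steps, $n\nrepd L_2$ if and only if $\ord_2(n)\in\{1,3\}$, which is exactly the condition $n=2+4\ell$ or $n=8+16\ell$. I expect the main obstacle to be not any single computation but the orchestration of the order analysis: recognizing that $16z^2$ has order at least $4$ is simultaneously the reason orders $1$ and $3$ remain unreachable and the reason all larger odd orders become reachable.
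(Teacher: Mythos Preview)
Your proof is correct. Both you and the paper rely on the key fact that the binary form $x^2+xy+y^2$ takes over $\Z_2$ exactly the values of even $2$-adic order (the paper phrases this as ``$x^2+xy+y^2$ lies in either $\un{2}$ or $4\Z_2$''), and both derive the non-representability of $n\equiv 2\pmod 4$ and $n\equiv 8\pmod{16}$ from this together with $16z^2\in 16\Z_2$.

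The organizational choices diverge slightly in the final affirmative step. You stratify by $\ord_2(n)$: even orders are handled by $M$ alone, and for odd order $2r+1\ge 5$ you pick $z$ of order $r-2$ so that $n-16z^2$ acquires even order $2r$. The paper instead treats the case $16\mid n$ uniformly (regardless of the parity of $\ord_2(n)$): writing $n=16n_0$, it observes that either $n_0$ or $n_0-1$ is a unit and hence represented by $M$, so $n_0$ is represented by $M\perp\qf{1}$, and then scales $x,y$ by $4$. Your organization by $\ord_2$ is a bit more systematic and makes the exceptional set $\{\ord_2(n)\in\{1,3\}\}$ transparent from the outset; the paper's residue-class bookkeeping is terser and avoids separately treating large odd orders. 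Neither approach offers a real advantage over the other---both are short and elementary.
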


\begin{proof} Since $\un{2}\repd M$, it follows that $\un{2},4\un{2}\repd L_2$. Also, for $x,y \in \Z_2$, $x^2+xy+y^2$ lies in either $\un{2}$ or $4\Z_2$. So all odd integers and integers of the type $4n_0$ with $n_0$ odd are represented by $L_2$, and no integer of the type $2+4\ell$ can be represented by $L_2$.

Consider $n=4n_0$ with $n_0$ even. If there exist $x,y\in \Z_2$ such that \begin{equation}\label{1st eq} n=x^2+xy+y^2+16z^2,\end{equation} then $x,y\in 2\Z_2$ and the righthand side of Equation~\eqref{1st eq} is in $4\Z_2$. Hence, no element of the type $8+16\ell$ is represented by $L_2$.

Finally, let $n=16n_0$, $n_0\in \Z$. Since $\un{2}\repd M$, either $n_0$ or $n_0-1$ is represented by $M$. So there exist $x_0,y_0,z\in \Z_2$ such that \begin{equation}\label{2nd eq}n_0=x_0^2+x_0y_0+y_0^2+z^2.\end{equation} It follows that Equation~\eqref{1st eq} is satisfied with $x=4x_0, y=4y_0$. This completes the proof. \end{proof}

\begin{lemma}\label{2-adic lemma 2} Let $L$ be a ternary quadratic $\Z$-lattice such that $L_2\cong \qf{1,16,48}$, and let $n$ be a positive integer. Then $n\nrepd L_2$ if and only if $n=5+8\ell$, $2+4\ell$, $3+4\ell$, $8+16\ell$, or $12+16\ell$, for some nonnegative integer $\ell$. \end{lemma}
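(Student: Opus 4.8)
The plan is to reduce everything to a single congruence observation plus one fact about a binary form. Writing the form attached to $L_2\cong\qf{1,16,48}$ as $Q(x,y,z)=x^2+16y^2+48z^2$, the first thing I would note is that $16y^2$ and $48z^2=16\cdot 3z^2$ are both divisible by $16$, so $Q(x,y,z)\equiv x^2\pmod{16}$ for all $x,y,z\in\Z_2$. Since the squares in $\Z_2$ reduce to $\{0,1,4,9\}$ modulo $16$, it follows that $n\repd L_2$ forces $n\equiv 0,1,4,9\pmod{16}$. The complement of $\{0,1,4,9\}$ among the residues mod $16$ is exactly $\{2,3,5,6,7,8,10,11,12,13,14,15\}$, and this set is precisely the union of the five families $2+4\ell$, $3+4\ell$, $5+8\ell$, $8+16\ell$, $12+16\ell$. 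This disposes of one direction (equivalently, shows that every listed $n$ satisfies $n\nrepd L_2$) with essentially no work.

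For the converse I would show that each $n$ with $n\equiv 0,1,4,9\pmod{16}$ is represented. The engine is the claim that the binary form $g(y,z)=y^2+3z^2$ represents every unit of $\Z_2$; I would verify this by exhibiting representations of the four unit classes modulo $8$ (for instance $1,3,7$ directly and $13\equiv5$) and then invoking the local square theorem to upgrade a representation of $u$ modulo $8$ to an exact representation of $u$. With this in hand the three cases are short. (i) If $n\equiv 1,9\pmod{16}$ then $n\equiv1\pmod 8$, so $n\in\sq{2}$ and $n=x^2$ is represented with $y=z=0$. (ii) If $n\equiv 4\pmod{16}$, write $n=4n_0$ with $n_0\equiv1\pmod4$, set $x=2x_1$, and reduce to $x_1^2+4g(y,z)=n_0$; solve it by taking $x_1^2=n_0$ when $n_0\equiv1\pmod 8$, and $x_1=1$, $g(y,z)=(n_0-1)/4$ (a unit) when $n_0\equiv5\pmod 8$. (iii) If $n\equiv 0\pmod{16}$, write $n=16m$; the congruence mod $16$ forces $x=4x_2$, and after dividing by $16$ the problem becomes representing $m$ by $h(x_2,y,z)=x_2^2+y^2+3z^2$.

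The last case reduces to showing that $h=\qf{1,1,3}$ is universal over $\Z_2$, and this again follows immediately from $g$ representing all units: for any $t\in\Z_2$, if $t$ is odd take $x_2=0$ and $g(y,z)=t$, while if $t$ is even take $x_2=1$ and $g(y,z)=t-1$, which is a unit. I expect no genuine obstacle; the only place demanding care is the $2$-adic lifting in the sufficiency direction, namely making sure that a representation modulo $8$ really lifts to a representation over $\Z_2$ and that the substitutions $x=2x_1$ and $x=4x_2$ are genuinely forced by the congruences. The argument runs parallel to Lemma~\ref{2-adic lemma1}, with the reduction modulo $16$ playing the organizing role.
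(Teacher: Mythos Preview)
Your proof is correct, and the overall strategy matches the paper's: reduce modulo powers of $2$, and for $16\mid n$ reduce to the universality of $\qf{1,1,3}$ over $\Z_2$. The organization differs in two ways worth noting. First, the paper establishes non-representability by successive descent (treat odd $n$, then force $4\mid n$, reduce to $x_0^2+4y^2+12z^2=n_0$, treat odd $n_0$, then force $4\mid n_0$), whereas your single observation $Q(x,y,z)\equiv x^2\pmod{16}$ dispatches all five excluded families at once; this is genuinely tidier. Second, for the universality of $\qf{1,1,3}$ the paper observes that the lattice is isotropic over $\Z_2$ and cites \cite[Proposition~4.1]{EG}, while you prove it directly from the fact that $y^2+3z^2$ represents every $2$-adic unit; your route is more self-contained and also supplies, via the same binary form, the sufficiency argument in the case $n\equiv 4\pmod{16}$, which the paper asserts without detail. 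Neither approach has a real advantage in difficulty, but yours avoids external references.
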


\begin{proof} Here $n\repd L_2$ if and only if there exist $x,y,z\in \Z_2$ such that \begin{equation}\label{first eq} n=x^2+16y^2+48z^2.\end{equation} If $2 \nmid n$, then Equation~\eqref{first eq} is solvable if and only if $n\equiv 1\,(\text{mod}\,8)$, thus ruling out integers of the type $5+8\ell$ and $3+4\ell$. If $2\mid n$, then $x=2x_0$ for some $x_0\in \Z_2$, so \begin{equation}\label{second eq} n=4x_0^2+16y^2+48z^2.\end{equation} So it must be that $4\mid n$, ruling out integers of the type $2+4\ell$. Write $n=4n_0$, with $n_0\in \Z$. Dividing Equation~\eqref{second eq} through by 4 then gives \begin{equation}\label{third eq} n_0=x_0^2+4y^2+12z^2.\end{equation} If $2\nmid n_0$, then Equation~\eqref{third eq} is solvable if and only if $n_0\equiv 1\,(\text{mod}\,4)$. This rules out integers of the type $4(3+4\ell)=12+16\ell$. If $2\mid n_0$ then $x_0\in 2\Z_2$, and so the righthand side of Equation~\eqref{third eq} is in $4\Z_2$. This rules out integers of the type $8+16\ell$. Finally, the lattice $\qf{1,1,3}$ is isotropic over $\Z_2$ and is therefore $\Z_2$-universal by \cite[Proposition 4.1]{EG}. Hence, all positive integers divisible by 16 are represented by $L_2$. This completes the proof. \end{proof}

\begin{lemma}\label{3-adic lemma} Let $L$ be a ternary quadratic $\Z$-lattice such that $L_3\cong \qf{1,3,9}$, and let $n$ be a positive integer. Then $n\nrepd L_3$ if and only if $n=2+3\ell$ or $n=9^k(6+9\ell)$, for some nonnegative integers $k,\ell$. \end{lemma}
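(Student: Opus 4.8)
The plan is to determine exactly which positive integers $n$ are represented by the lattice $L_3\cong\qf{1,3,9}$ over $\Z_3$ by reducing the congruence $n=x^2+3y^2+9z^2$ modulo successive powers of $3$, exactly as in the proofs of the two preceding $2$-adic lemmas. First I would handle the case $3\nmid n$. Here one needs $x$ to be a $3$-adic unit, so $x^2\in\un{3}\sq{3}$; by Hensel's lemma a $3$-adic unit is a square if and only if it is a square modulo $3$, i.e. the squares among units are precisely $1\,(\text{mod}\,3)$. Since $3y^2+9z^2\in 3\Z_3$, the value $n$ is a unit represented by $L_3$ exactly when $n\equiv x^2\equiv 1\,(\text{mod}\,3)$, which rules out the integers $n\equiv 2\,(\text{mod}\,3)$, that is those of the form $2+3\ell$.

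Next I would treat the case $3\mid n$ but $9\nmid n$, and then $9\mid n$. If $3\mid n$, then from $n=x^2+3y^2+9z^2$ one forces $3\mid x$, say $x=3x_0$, and dividing through by $3$ yields $n/3=3x_0^2+y^2+3z^2$; now $y$ must be a unit if $3\nmid (n/3)$, and the same unit-square analysis shows $n/3\equiv y^2\equiv 1\,(\text{mod}\,3)$ is forced. Thus among $n$ with $\ord_3(n)=1$, only those with $n/3\equiv 1\,(\text{mod}\,3)$ are represented, and the failures are $n/3\equiv 2\,(\text{mod}\,3)$, i.e. $n=3(2+3\ell)=6+9\ell$, giving the $k=0$ instances of $9^k(6+9\ell)$. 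When $9\mid n$, I expect that $L_3$ becomes essentially self-similar after scaling: writing $n=9n_0$ and tracing the divisibility forces $x\in 3\Z_3$, $y\in 3\Z_3$, after which dividing by $9$ returns a representation problem of the same shape $n_0=x_1^2+3y_1^2+9z_1^2$ (possibly after rechecking that the $z$-term behaves correctly). This self-similarity is what produces the recursive family $9^k(6+9\ell)$.

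The cleanest way to organize the $9\mid n$ case is to establish a scaling reduction: $n\nrepd L_3$ with $9\mid n$ if and only if $n/9\nrepd L_3$. I would prove this by showing that any representation of $n$ forces $x,y\in 3\Z_3$ (the $z$ variable contributes $9z^2$, which is automatically divisible by $9$), so that $x=3x_1$, $y=3y_1$ and dividing the equation by $9$ gives $n/9=x_1^2+3y_1^2+z^2$; conversely a representation of $n/9$ scales up. One subtlety to check here is that the divided form $x_1^2+3y_1^2+z^2$ is $\Z_3$-equivalent to the original $\qf{1,3,9}$ as far as the represented values are concerned, or else that the discrepancy in the last coefficient does not affect which residues occur — and indeed $x_1^2+3y_1^2+z^2$ represents the same units and the same $3\Z_3$-values as before, so the induction closes. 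Iterating the reduction, $9^k n_0\nrepd L_3$ if and only if $n_0\nrepd L_3$; combined with the first two cases, the only failures with $\ord_3$ a multiple of $2$ arising after peeling off $9^k$ are those with $n_0=6+9\ell$, yielding exactly $n=9^k(6+9\ell)$, together with the family $2+3\ell$ coming from the unit case.

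The main obstacle I anticipate is getting the bookkeeping of the scaling reduction exactly right, specifically verifying that after dividing by $9$ the resulting ternary genuinely represents the same set of values as $\qf{1,3,9}$ so that the recursion is clean, and confirming that no additional representable or non-representable residue classes sneak in at the boundary between the $\ord_3(n)=1$ case and the $9\mid n$ case. Once the self-similarity is pinned down, each individual step is an elementary Hensel-plus-reduction computation of the same type already carried out in Lemmas~\ref{2-adic lemma1} and \ref{2-adic lemma 2}, and the two families $2+3\ell$ and $9^k(6+9\ell)$ fall out directly by assembling the three cases.
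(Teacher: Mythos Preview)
Your scaling reduction in the $9\mid n$ case is wrong, and this is not just a bookkeeping issue. You correctly observe that if $9\mid n$ and $n=x^2+3y^2+9z^2$ then $3\mid x$ and $3\mid y$, and dividing by $9$ lands you on $n/9=x_1^2+3y_1^2+z^2$, i.e.\ the form $\qf{1,1,3}$. But your claim that this form ``represents the same units and the same $3\Z_3$-values as before'' is false: $\qf{1,1,3}$ represents $2=1^2+1^2+0$ over $\Z_3$, whereas $\qf{1,3,9}$ does not. Concretely, take $n=18$: it is represented by $\qf{1,3,9}$ (e.g.\ $18=3^2+3\cdot 0+9\cdot 1^2$), yet $n/9=2$ is not. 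So the biconditional ``$n\nrepd L_3$ with $9\mid n$ iff $n/9\nrepd L_3$'' fails, and if it held your argument would wrongly place every $9^k(2+3\ell)$ in the exceptional set.

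The approach is salvageable: what you actually get is the equivalence $9n_0\repd\qf{1,3,9}\iff n_0\repd\qf{1,1,3}$, and $\qf{1,1,3}$ \emph{is} genuinely self-similar under scaling by $9$, so you can run the recursion on that form instead (it is universal on units and fails exactly on $9^j(6+9\ell)$). Alternatively, and this is what the paper does, you can bypass the recursion entirely by observing that the underlying $\Q_3$-space of $\qf{1,3,9}$ is anisotropic, so at the space level the only non-represented class is $-dV\cdot\sq{3}=-3\sq{3}$; since $9^k(6+9\ell)\in -3\sq{3}$, these are excluded immediately, and the remaining work is just to exhibit explicit representations for all other $n$ by the \emph{lattice}. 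Either fix closes the gap, but as written the induction does not close.
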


\begin{proof} Let $V$ denote the underlying quadratic space. By a computation of Hasse symbols, it follows that $V_3$ is anisotropic; hence $\alpha \nrepd V_3$ for any $\alpha \in -dV = -3\sq{3}$ (see, e.g., \cite[Lemma 2.2]{EG}). As $9^k(6+9\ell) = 3^{2k}\cdot 3 (2+3\ell) \in -3\sq{3}$, no such integer can be represented by $L_3$. Integers of the type $n=2+3\ell$ are ruled out for representation by $L_3$ by the Local Square Theorem \cite[63:1]{OM}.

It remains to show that all other integers are represented by $L_3$. Write $n=3^tn_0$, with $t$ a nonnegative integer and $n_0\equiv 1,2\,(\text{mod}\,3)$. If $n_0\equiv 1\,(\text{mod}\,3)$, then there exists $\lambda \in \un{3}$ such that $n_0=\lambda^2$. So if $t=0$ and $n_0\equiv 1\,(\text{mod}\,3)$ then $n=\lambda^2\repd L_3$. If $t=2k+1$ is odd and $n_0\equiv 1\,(\text{mod}\,3)$, then $n=3(3^k\lambda)^2\repd L_3$. By \cite[92:1b]{OM}, $\un{3}\repd \qf{1,1}$. So $3^2\un{3}\repd \qf{1,3^2}$. It then follows that $n\repd L_3$ whenever $t$ is even and $t\geq 2$. This exhausts all cases and completes the proof. \end{proof}

\begin{proposition}\label{B4} The form $3x^2+7y^2+7z^2+3xy+3xz+5yz$ represents  a positive integer $n$ if and only if $n$ is not of the type \[2+3\ell, 9^k(6+9\ell), 4^a(2+4\ell)\,\,\text{or}\,\, 4^aM_3^2,\] where $k,\ell$ are nonnegative integers and $a\in \{0,1\}$. \end{proposition}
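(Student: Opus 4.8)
The plan is to split the failures of representation by the lattice $L$ attached to the form into two disjoint sources --- integers not represented by $\gen$ (a purely local obstruction) and integers represented by $\gen$ but not by $L$ (the spinor obstruction) --- and then to show that these two sources assemble exactly into the four listed families.

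First I would record the relevant local structures. A routine Jordan-splitting computation from the Gram matrix of the form gives $L_3\cong\qf{1,3,9}$ and $L_2\cong M\perp\qf{16}$, where $M$ is the lattice of $x^2+xy+y^2$; the latter is consistent with the entry $L_2'=\A\perp\qf{2^5}$ in Table~\ref{Data for Group B}, since scaling $M\perp\qf{16}$ by $2$ produces $\A\perp\qf{2^5}$. For every prime $p\nmid 2\Delta$ the completion $L_p$ is unimodular of rank $3$, hence isotropic and $\Z_p$-universal, so it imposes no condition on $n$. Therefore $n\repd\gen$ if and only if $n\repd L_2$ and $n\repd L_3$, and Lemmas~\ref{2-adic lemma1} and~\ref{3-adic lemma} identify the complementary set of integers \emph{not} represented by $\gen$ as
\[
S=\{\,2+3\ell\,\}\cup\{\,9^k(6+9\ell)\,\}\cup\{\,4^a(2+4\ell):a\in\{0,1\}\,\},
\]
where the two families $2+4\ell$ and $8+16\ell$ produced by Lemma~\ref{2-adic lemma1} have been combined as $4^a(2+4\ell)$.

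Next I would bring in the spinor analysis already in hand. Since $L$ is spinor regular, $n\repd L$ if and only if $n$ is represented by its spinor genus SGI. Among the integers represented by $\gen$, Proposition 5.1(iii) shows that those failing to be represented by the form are exactly the integers in $M_3^2\cup 4M_3^2=\{4^aM_3^2:a\in\{0,1\}\}$ --- precisely the spinor exceptional integers for $\gen$ that are represented by SGII but not by SGI. Combining the two analyses, $n\nrepd L$ if and only if $n\in S$ or $n\in\{4^aM_3^2\}$.

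Finally I would verify that the two contributions merge into exactly the stated list with neither overlap nor omission, which is the one step I expect to require genuine care. The families $4^aM_3^2$ lie in $\gen$ by construction, whereas the families in $S$ do not, so the two are disjoint; hence their union is literally $2+3\ell$, $9^k(6+9\ell)$, $4^a(2+4\ell)$, $4^aM_3^2$, and taking the contrapositive yields the proposition. The disjointness can be checked directly: every element of $4^aM_3^2$ is prime to $3$ and $\equiv1\pmod 3$ (so it avoids $2+3\ell$ and $9^k(6+9\ell)$), and is either odd or $\equiv4\pmod{16}$ (so it avoids $2+4\ell$ and $8+16\ell$), confirming that no element of $4^aM_3^2$ satisfies any of the local obstruction congruences.
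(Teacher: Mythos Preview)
Your proposal is correct and follows essentially the same route as the paper's own proof: reduce to the local obstructions at $2$ and $3$ via Lemmas~\ref{2-adic lemma1} and~\ref{3-adic lemma}, invoke $\Z_p$-universality of $L_p$ for $p\nmid 6$, and then apply Proposition~5.1(iii) to handle the integers in $\gen$ not represented by $L$. The explicit disjointness check you append is a pleasant sanity verification but is not logically required, since Proposition~5.1(iii) already guarantees that the elements of $4^aM_3^2$ lie in $\gen$ and hence outside $S$.
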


\begin{proof} This is the form B4. Let $L$ be a lattice corresponding to this form and let $n$ be a positive integer. By \cite[92:1b]{OM}, $n\repd L_p$ for all primes $p\neq 2,3$. For $p=2$ and $p=3$, the conditions for representation by $L_p$ are given in Lemmas~\ref{2-adic lemma1} and \ref{3-adic lemma}, respectively. Consequently, $n\repd \gen$ if and only if $n$ is not of one of the types $2+3\ell$, $9^k(6+9\ell)$ or $4^a(2+4\ell)$ for some nonnegative integers $k,\ell$ and $a\in\{0,1\}$. If $n$ is not of one of these types, then $n\repd L$ if and only if $n$ does not lie in $M_3^2$ or $4M_3^2$, by Proposition 5.1(iii). \end{proof}

\begin{proposition}\label{B11} The form $9x^2+16y^2+48z^2$ represents  a positive integer $n$ if and only if $n$ is not of the type \[5+8\ell, 4^a(2+4\ell), 4^a(3+4\ell) \,\,\text{or}\,\, 4^aM_3^2,\] where $\ell$ is a nonnegative integer and $a\in \{0,1\}$. \end{proposition}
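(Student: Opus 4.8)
The plan is to follow the template established in the proof of Proposition~\ref{B4}, reducing the representation problem for $9x^2+16y^2+48z^2$ to a union of purely local conditions at the primes $2$ and $3$ together with the spinor-exceptional analysis already recorded in Proposition 5.1(iii). Write $L$ for the ternary lattice $\qf{9,16,48}$ attached to this form. The first step is to dispose of the primes away from the discriminant: since $L_p$ is unimodular for every prime $p\neq 2,3$, we have $n\repd L_p$ for all such $p$ by \cite[92:1b]{OM}, so the only local obstructions can arise at $2$ and $3$.

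Next I would pin down the two relevant local lattices. Because $9\equiv 1\modeight$, we have $9\in\sq{2}$, whence $L_2\cong\qf{1,16,48}$, which is exactly the hypothesis of Lemma~\ref{2-adic lemma 2}. Likewise $16=4^2\in\sq{3}$ and $48=4^2\cdot 3$, so after scaling units to squares $L_3\cong\qf{1,3,9}$, the hypothesis of Lemma~\ref{3-adic lemma}. These identifications also match the entries $L_3\cong\qf{1,3,3^2}$ and $L_2'=\qf{1,2^4,3\cdot 2^4}$ listed for B11 in Table~\ref{Data for Group B}. Assembling the two lemmas then gives the genus condition: $n\repd\gen$ precisely when $n$ avoids every listed local obstruction. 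At $p=3$ these are the classes $2+3\ell$ and $9^k(6+9\ell)$; at $p=2$, Lemma~\ref{2-adic lemma 2} rules out $5+8\ell$, $2+4\ell$, $3+4\ell$, $8+16\ell$ and $12+16\ell$. Here I would record the collapses $8+16\ell=4(2+4\ell)$ and $12+16\ell=4(3+4\ell)$, which allow the five $2$-adic classes to be written compactly as $5+8\ell$ together with $4^a(2+4\ell)$ and $4^a(3+4\ell)$ for $a\in\{0,1\}$.

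Finally, for an integer $n$ that \emph{is} represented by the genus, the global step is supplied by Proposition 5.1(iii), which records that the integers represented everywhere locally but not globally by the form B11 are exactly those in $M_3^2$ and $4M_3^2$. Since $L$ is spinor regular, such an $n$ fails to be represented by $L$ exactly when it is a spinor exceptional integer lying in SGII, i.e. when $n\in M_3^2\cup 4M_3^2=4^aM_3^2$ ($a\in\{0,1\}$). Combining this spinor-exceptional set with the genus condition from the previous paragraph produces the complete list of non-represented positive integers.

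The essential conceptual content is the spinor-theoretic input of Proposition 5.1(iii): because $L$ is \emph{not} alone in SGI, the local and modular-forms methods of \cite{A} detected only the even obstructions, and it is precisely the odd square obstructions in $M_3^2$ (recall from the Remark following Proposition 5.1 that $M_3$ is generated by $1$ and the primes congruent to $1$ modulo $3$) that require the spinor genus analysis to uncover. Granting that input, the main obstacle here is organizational rather than conceptual: one must check that the $2$-adic, $3$-adic and square-class descriptions are mutually consistent and merge into a single non-redundant list, in particular verifying that every element of $4^aM_3^2$ does lie in the genus (failing only globally) and is therefore correctly separated from the local obstruction classes.
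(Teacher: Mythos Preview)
Your approach mirrors the paper's exactly: dispose of primes $p\neq 2,3$ via \cite[92:1b]{OM}, read off the local obstructions at $2$ and $3$ from Lemmas~\ref{2-adic lemma 2} and~\ref{3-adic lemma}, and then invoke Proposition~5.1(iii) for the spinor-exceptional set.

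There is, however, a genuine gap in the final merging step, and the paper's own proof shares it. You correctly list the $3$-adic obstructions $2+3\ell$ and $9^k(6+9\ell)$, but then assert without verification that the combined list ``merges into'' the four types appearing in the statement. It does not. Take $n=17$: since $17\equiv 1\pmod 8$ it is represented by $L_2\cong\qf{1,16,48}$ (indeed $17=1^2+16\cdot 1^2$), but $17\equiv 2\pmod 3$, so $17\nrepd L_3\cong\qf{1,3,9}$ by Lemma~\ref{3-adic lemma}. Yet $17$ belongs to none of the four listed types: it is not $\equiv 5\pmod 8$, not $\equiv 2$ or $3\pmod 4$, not divisible by $4$, and (being a non-square prime with $17\equiv 2\pmod 3$) not in $M_3^2$. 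Thus the stated equivalence fails at $n=17$, and the $3$-adic classes must be retained in the final list, exactly as in Proposition~\ref{B4}. The paper's proof silently drops these classes when passing from ``Lemmas~\ref{2-adic lemma 2} and~\ref{3-adic lemma}'' to its description of the genus condition; your proposal, by deferring the consistency check you flag in the last paragraph, inherits the same omission.
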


\begin{proof} This is the form B11. Let $L$ be a lattice corresponding to this form and let $n$ be a positive integer. By \cite[92:1b]{OM}, $n\repd L_p$ for all primes $p\neq 2,3$. For $p=2$ and $p=3$, the conditions for representation by $L_p$ are given in Lemmas~\ref{2-adic lemma 2} and \ref{3-adic lemma}, respectively. Consequently, $n\repd \gen$ if and only if $n$ is not of one of the types $5+8\ell$, $4^a(2+4\ell)$ or $4^a(3+4\ell)$ for some nonnegative integers $\ell$ and $a\in\{0,1\}$. If $n$ is not of one of these types, then $n\repd L$ if and only if $n$ does not lie in $M_3^2$ or $4M_3^2$, by Proposition 5.1(iii). \end{proof}


\end{document}